\numberwithin{equation}{section}
\newcommand{\be}{\begin{equation}}
\newcommand{\ee}{\end{equation}}
\newcommand{\R}{\mathbb R}
\newcommand{\pd}{\partial}
\newcommand{\ep}{\varepsilon}
\renewcommand{\phi}{\varphi}
\newtheorem{theorem}{Theorem}[section]
\newtheorem{corollary}[theorem]{Corollary}
\newtheorem{lemma}[theorem]{Lemma}
\theoremstyle{definition}
\theoremstyle{definition}
\newtheorem{df}[theorem]{Definition}
\newtheorem*{notation}{Notation}
\theoremstyle{remark}
\newtheorem{remark}[theorem]{Remark}
\newtheorem*{remark*}{Remark}
\DeclareMathOperator{\len}{length}
\DeclareMathOperator{\dist}{dist}
\begin{document}

\renewcommand{\L}{\mathcal L}

\title[On intrinsic geometry of surfaces in normed spaces]
{On intrinsic geometry of surfaces \\ in normed spaces}

\author{Dmitri Burago}                                                          
\address{Dmitri Burago: Pennsylvania State University,                          
Department of Mathematics,
University Park, PA 16802, USA}                      
\email{burago@math.psu.edu}                                                     
                                                                                
\author{Sergei Ivanov}
\address{Sergei Ivanov:
St.~Petersburg Department of Steklov Mathematical Institute,
Fontanka 27, St.Petersburg 191023, Russia}
\email{svivanov@pdmi.ras.ru}

\thanks{The first author was partially supported                                
by NSF grants DMS-0604113 and DMS-0412166.                                      
The second author was partially supported by the                                
Dynasty foundation and RFBR grant 08-01-00079-a.}

\begin{abstract}
We prove three facts about intrinsic geometry of surfaces in a normed 
(Minkowski) space. When put together, these facts
demonstrate a rather intriguing picture.
We show that (1) geodesics on saddle surfaces
(in a space of any dimension) behave as they are expected to:
they have no conjugate points and thus minimize length in their homotopy class;
(2) in contrast, every two-dimensional Finsler manifold can be
locally embedded as a saddle surface in a 4-dimensional space;
and (3) geodesics on convex surfaces in a 3-dimensional space also
behave as they are expected to: on a complete strictly convex surface,
no complete geodesic minimizes the length globally.
\end{abstract}

\subjclass[2010]{53C60, 53C22 (primary), 53C45 (secondary)}

\keywords{Finsler metric, saddle surface, convex surface, geodesic}

\maketitle

\section{Introduction}

The goal of this paper is to prove three facts about intrinsic
geometry of surfaces in a normed (Minkowski) space.
When put together, these facts
demonstrate a rather intriguing picture. Namely, Theorem \ref{t-saddle-surface}
asserts that geodesics on saddle surfaces
(in a space of any dimension) behave as they are expected to:
they have no conjugate points and thus minimize length in their homotopy class.
In contrast, Theorem \ref{t-saddle-embedding} says that every 
two-dimensional Finsler manifold can be locally embedded as a saddle surface
in a 4-dimensional normed space.

Thus the fact that geodesics on saddle surfaces minimize the length is global and, 
unlike in Riemannian geometry, it cannot be derived from studying local invariants
such as the Gaussian curvature.
Note that the property that a surface is saddle has
nothing to do with various types of Finsler curvatures,
for they can be negative or positive at
some points of cylindrical surfaces.

Furthermore, Theorem \ref{t-convex-surface} 
asserts that geodesics on convex surfaces 
(in a 3-dimensional space) also
behave as they are expected to:
on a complete strictly convex surface,
no complete geodesic minimizes the length globally
(and therefore some geodesics have conjugate points.)
Therefore such a surface cannot be re-embedded as a
saddle surface in any normed space
(even though it can be re-embedded locally, hence this
obstruction is of global nature). The nature of these phenomena remains obscure 
to us. 

\begin{remark*}
Interestingly enough, for \textit{polyhedral} surfaces
in normed spaces, global minimality of geodesics
can be deduced from local intrinsic geometry: a globalization theorem holds.
Studying Finsler geodesics has nice applications where there is no word
``Finsler'' in the formulation. For instance, consider a braid of several 
strings connecting two sets of nails in two parallel planes in $\R^3$. Having 
fixed topological type of the braid, one asks if the braid with the shortest 
total length of strings is unique (and if so, how convex is the length function
near the optimum, compare with \cite{BGS}).
This question, having started from a purely Euclidean setup,
naturally reduces to a problem about Finslerian geodesics.
(We are grateful to Rahul \cite{braid-question} who brought this
question to our attention.)
We will address this aspect of geometry of polyhedral Finsler manifolds along with
a few others elsewhere. 
\end{remark*}

Now we proceed to definitions and formulations.
Let $\|\cdot\|$ be a norm on a finite dimensional
vector space $V$. Note that the norm is
uniquely determined by its unit ball
$B=\{v\in V:\|v\|\le 1\}$ which is a
centrally symmetric convex body in~$V$.
The boundary of $B$ is the unit sphere
of $\|\cdot\|$, it also determines the norm uniquely.

We say that a norm is $C^r$-smooth if
it is a $C^r$ function on $V$ away from the origin.
This is equivalent to the property that
the unit sphere of the norm is a
$C^r$ hypersurface in~$V$.
If the $C^r$ prefix is omitted, the term ``smooth'' means $C^\infty$
(though the results are probably valid for $C^2$, we just did not care 
to chase the number of derivatives through the proofs).

A norm $\|\cdot\|$ is said to be \textit{strictly convex}
if its unit sphere does not contain straight line segments.
This is equivalent to the property that the triangle inequality
$$
 \|v+w\| \le \|v\| + \|w\|, \qquad v,w\in V
$$
is strict unless $v$ and $w$ are proportional.

A norm $\|\cdot\|$ on $V$ is said to be \textit{quadratically convex}
if for every $v\in V\setminus\{0\}$ there is a positive definite
quadratic form on $V$ whose square root majorizes the norm
everywhere and equals the norm on the vector~$v$. For smooth norms,
this is equivalent to the following: the function $\|\cdot\|$
has positive definite second derivative at every point of $V\setminus\{0\}$.
Smooth quadratically convex norms are called \textit{Minkowski norms}.

A (reversible) \textit{Finsler metric} on a smooth manifold $M$ is a continuous
map $\phi\colon TM\to\R$ which is smooth away from the zero section
and such that for every $x\in M$ the restriction of $\phi$ to
$T_xM$ is a Minkowski norm.
A \textit{Finsler manifold} is a manifold $M$ equipped
with a Finsler metric.
A detailed treatise of differential geometry of Finsler
manifolds can be found e.g.\ in \cite{BCS},
below is a list of the basic definitions and facts that we use.

The length of a smooth curve $\gamma\colon[a,b]\to M$
in a Finsler manifold $M=(M,\phi)$ is defined by
$$
 \len(\gamma) = \int_a^b \phi(\dot\gamma(t))\,dt .
$$
Geodesics in $M$ are locally length minimizing curves.
Equivalently, geodesics are critical points of the
energy functional $\gamma\mapsto\int\phi^2(\dot\gamma)$,
they are determined by the corresponding Euler--Lagrange equation.
Smoothness and quadratic convexity of~$\phi$ ensure that
this equation is non-degenerate and imply the usual
existence and uniqueness properties of solutions.
All geodesics in this paper are assumed parameterized
by arc length.

Surfaces in normed spaces are natural examples of
Finsler manifolds.
Namely if $V$ is a vector space with a Minkowski norm $\|\cdot\|$
and $M$ is a smooth manifold, then every smooth immersion
$f\colon M\to V$ induces a Finsler $\phi$ metric on $M$
given by $\phi(v)=\|df(v)\|$ for all $v\in TM$.
If $\phi$, $f$ and $\|\cdot\|$ are so related,
one also says that $f$ is an \textit{isometric immersion}
of $(M,\phi)$ to $(V,\|\cdot\|)$.

\begin{df}
A two-dimensional smooth surface $S$ in $\R^n$
(that is, a smooth immersion $S\colon M\to\R^n$
where $M$ is a two-dimensional manifold)
is \textit{strictly saddle} (resp.\ \textit{saddle})
at a point $p\in M$
if, for every normal vector at $p$, the second fundamental form
of $S$ with respect to this normal vector is indefinite
(resp.\ indefinite or degenerate). A surface is (strictly)
saddle if it is (strictly) saddle at every point.
\end{df}

One easily sees that this definition is affine invariant
(or, equivalently, is independent of the Euclidean structure
in the ambient space). Therefore is makes sense for surfaces
in a vector space (without any Euclidean structure).
In a Euclidean space, saddle surfaces have non-positive
Gaussian curvature and therefore their geodesics
have no conjugate points.
Furthermore, only saddle surfaces preserve non-positiveness
of curvature under all affine transformations, cf.~\cite{Shefel}.

The main result of this paper is the following theorem
asserting that the ``no conjugate points'' property of
saddle surfaces holds true in non-Euclidean normed spaces
as well.

\begin{theorem}
\label{t-saddle-surface}
Let $V$ be a finite dimensional space
with a Minkowski norm
and $S$ a smooth saddle surface in $V$.
Then every geodesic segment on $S$
minimizes the length among all $C^0$-nearby curves with
the same endpoints.
\end{theorem}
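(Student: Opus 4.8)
\emph{Proof strategy.} The plan is to split the argument into a soft part --- deducing the stated minimality from the absence of conjugate points --- and the hard part, which is to show that geodesics on saddle surfaces have no conjugate points.

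\emph{Step 1: no conjugate points $\Rightarrow$ minimality.} Here I would use Hilbert's invariant-integral (field of extremals) method, transplanted to the normed setting. Let $\gamma\colon[a,b]\to M$ be a geodesic, $c=f\circ\gamma$ the corresponding curve in $V$, and extend $\gamma$ slightly past both endpoints. Granting Step~2, the geodesics issuing from a point $\gamma(a-\ep)$ do not refocus along $\gamma$, so (by compactness of $[a,b]$) they sweep out a neighborhood $U$ of $\gamma([a,b])$ in $S$ diffeomorphically away from the basepoint --- a field of geodesics $c_w$. Along $c_w$ let $p_w(t)=D\|\cdot\|(\dot c_w(t))$ be the momentum covector (it lies in the dual unit sphere) and set $\omega_q=p_w(t)|_{T_qS}$ for $q=c_w(t)$. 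In the coordinates $(t,w)\mapsto c_w(t)$ one has $\omega=dt+B\,dw$ because the $c_w$ are unit speed, and $\pd_tB=\dot p_w(\pd_w c_w)+p_w(\pd_w\dot c_w)=0$: the first term because $\dot p_w(t)$ annihilates $T_{c_w(t)}S$ --- which is exactly the geodesic equation for a curve in a normed space --- and the second because $p_w(t)(\pd_w\dot c_w)=\pd_w\|\dot c_w(t)\|=0$. Hence $\omega$ is closed, and $\omega_q(v)\le\|v\|$ for $v\in T_qS$ because $\omega_q$ is the restriction of a covector in the dual unit ball and the Finsler metric induced on $S$ is the ambient norm. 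Taking $U$ simply connected, any curve $c'$ in $U$ with the endpoints of $\gamma$ --- hence homotopic to $\gamma$ rel endpoints in $U$ --- satisfies $\len(\gamma)=\int_\gamma\omega=\int_{c'}\omega\le\len(c')$, which is the assertion.

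\emph{Step 2: saddle surfaces have no conjugate points.} I would write out the second variation of length along $c=f\circ\gamma$ for a variation through curves on $S$ with fixed endpoints. Two integrations by parts and the $1$-homogeneity of the norm yield, for a vector field $W$ along $c$ with values in $TS$ vanishing at the endpoints,
\[
  I(W,W)=\int_a^b\Bigl[\,g_t(\dot W,\dot W)-\dot p(t)\bigl(II_S(W,W)\bigr)\,\Bigr]\,dt ,
\]
where $g_t=D^2\|\cdot\|(\dot c(t))$ is the Hessian of the norm (positive semidefinite, kernel $\R\dot c(t)$), $II_S$ is the $V/TS$-valued second fundamental form of $S$, and $\dot p(t)=\tfrac{d}{dt}D\|\cdot\|(\dot c(t))$ annihilates $T_{c(t)}S$ by the geodesic equation. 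Conjugate points are exactly the failure of $I$ to be positive definite on such $W$, so it suffices to prove $I(W,W)\ge0$, with strictness for $W\not\equiv0$ (the borderline of an endpoint conjugate point being accommodated by approximation). The saddle hypothesis enters through the affine-invariant form of the definition: since $\dot p(t)$ is a normal covector, $w\mapsto\dot p(t)(II_S(w,w))$ is an indefinite or degenerate quadratic form on the $2$-plane $T_{c(t)}S$ --- never positive definite --- so there is a transverse direction along which the integrand is pointwise nonnegative. To upgrade this to positivity of $I$ on all fields $W$, I would compare with the Euclidean structures osculating the norm along $\gamma$: each $\hat g_t=D^2(\tfrac12\|\cdot\|^2)(\dot c(t))$ is a genuine inner product on $V$, with respect to which $S$ is still a saddle surface (saddleness being affine invariant) and therefore has nonpositive Gauss curvature at $c(t)$; rewriting $I(W,W)$ in terms of the $\hat g_t$ and using the Gauss equation together with the geodesic equation to absorb the cross terms and the $\dot c(t)$-component of $W$ should reduce the estimate to the classical nonpositively curved case.

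\emph{The main obstacle} is exactly this last step --- converting the pointwise ``indefinite or degenerate'' property of the normal second fundamental form into the global bound $I(W,W)\ge0$. A naive Poincar\'e-type estimate only handles short geodesic segments, and, as the introduction emphasises with the cylinder example, no argument routed through a Finslerian curvature can work, since that curvature can be positive on a saddle surface. The proof must genuinely exploit the rigidity of saddle surfaces (cf.\ \cite{Shefel}) and the interplay, along the geodesic, between $II_S$ and the osculating inner products $\hat g_t$; the fact that the $\hat g_t$ vary with $t$ --- so $I$ is not the index form of any fixed Riemannian metric --- is precisely where the real difficulty lies. The remaining ingredients --- the field-of-extremals construction of Step~1, the derivation of the second variation formula, and the standard passage from ``$I$ positive definite'' to ``no conjugate points, hence local minimality'' --- are routine.
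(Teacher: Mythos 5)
There is a genuine gap, and you locate it yourself: Step 2 is never proved. Your decomposition (no conjugate points $\Rightarrow$ minimality via a field of extremals, then prove no conjugate points via the index form) is a legitimate architecture, and Step 1 is indeed routine once Step 2 is granted. But all of the content of the theorem sits in Step 2, and your proposed route --- rewrite $I(W,W)$ using the osculating inner products $\hat g_t$ and ``reduce to the classical nonpositively curved case'' --- is not carried out and does not obviously close. The obstruction is the one you name: the integrand $g_t(\dot W,\dot W)-\dot p(t)\bigl(II_S(W,W)\bigr)$ is not pointwise nonnegative (the form $w\mapsto\dot p(t)(II_S(w,w))$ is merely indefinite or degenerate, so it can be positive on the particular value $W(t)$), the inner products $\hat g_t$ vary with $t$ so $I$ is not the index form of any fixed Riemannian metric, and $\gamma$ is not a geodesic of the surface for any fixed Euclidean structure, so the Gauss equation does not hand you a Jacobi equation with nonpositive curvature. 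As the paper stresses, the induced Finsler metric of a saddle surface can have positive flag curvature, so no pointwise curvature inequality along $\gamma$ can be the mechanism; some globally defined auxiliary object is needed, and your proposal does not supply one.

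For comparison, the paper sidesteps the index form entirely. It defines a candidate calibrating function $h$ directly by the implicit relation $\langle \L_\Phi(\dot\gamma(t)),\,x-\gamma(t)\rangle=0$ with $t=h(x)$ (so the level sets of $h$ are the Legendre-normal sections of the surface along $\gamma$), and proves that $h$ is an \emph{almost} calibrator: $\phi^*(dh(x))\le 1+o(\dist(x,\gamma)^2)$, which a separate elementary lemma upgrades to a true calibrator by multiplying by $1-\sigma f^2$. The verification is a second-order computation in coordinates $(t,s)$ adapted to the level sets of $h$: writing $\rho=\Phi^2(r'_t)$ and $Q=\frac12 d^2\Phi^2(\dot\gamma(t_0))$, one gets $\tfrac12\rho''_{ss}=Q(r''_{ts},r''_{ts})-Q(r''_{tt},n)\,Q(r''_{ss},n)\ge Q(r''_{ts},n)^2-Q(r''_{tt},n)\,Q(r''_{ss},n)=-\det(II_n)\ge 0$, where $n$ is the $Q$-unit normal in the direction of $\ddot\gamma(t_0)$. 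So the saddle hypothesis is used only through $\det(II_n)\le 0$ for a single, geodesic-dependent normal direction at each point, applied to the specific coordinate fields $r'_t,r'_s$ rather than to an arbitrary variation field --- exactly the extra structure your index-form approach lacks. Unless you can produce the missing argument for Step 2, the proposal does not prove the theorem.
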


The standard argument (similar to the proof of the
Cartan--Hadamard theorem) shows that
Theorem \ref{t-saddle-surface} implies
the following.

\begin{corollary}
Let $M$ be a complete simply connected
two-dimensional Finsler manifold
which admits a saddle isometric immersion into a
vector space with a Minkowski norm. Then
every two points of $M$ 
are connected by a unique geodesic,
and all geodesics are length minimizers.
\end{corollary}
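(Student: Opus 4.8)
The plan is to run the classical Cartan--Hadamard argument, with Theorem~\ref{t-saddle-surface} playing the role of nonpositive curvature. Fix $p\in M$. The first step is to observe that a geodesic which is a length minimizer among $C^0$-nearby curves with the same endpoints cannot contain a conjugate point in its interior: a conjugate point strictly inside would, by the standard second variation (index form) computation, produce a $C^\infty$-small --- hence $C^0$-small --- variation with fixed endpoints that strictly decreases the length, contradicting Theorem~\ref{t-saddle-surface}. Since $M$ is complete, every geodesic extends to an arbitrarily long one, so no geodesic on $M$ has any conjugate point at all; equivalently, $\exp_p$ is defined on all of $T_pM$ (Hopf--Rinow for Finsler manifolds) and is everywhere a local diffeomorphism.

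The second step is the covering argument. Pull back $\phi$ by $\exp_p$ to a Finsler metric $\tilde\phi$ on $T_pM$, so that $\exp_p\colon(T_pM,\tilde\phi)\to(M,\phi)$ becomes a local isometry. The rays $t\mapsto tv$ through the origin are $\tilde\phi$-geodesics --- their images are the $\phi$-geodesics $t\mapsto\exp_p(tv)$, which exist for all $t$ by completeness of $M$ --- so $(T_pM,\tilde\phi)$ is geodesically complete at the origin and hence complete by Hopf--Rinow. A local isometry whose source is complete is a covering map, and since $M$ is simply connected this covering $\exp_p$ must be a diffeomorphism onto $M$. In particular every $q\in M$ is joined to $p$ by a geodesic that is unique up to parametrization, and as $p$ was arbitrary, any two points of $M$ are joined by a unique geodesic.

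The third step upgrades these geodesics to global minimizers. Given a unit-speed geodesic $\alpha\colon[0,\ell]\to M$, set $A=\{t\in[0,\ell]:\len(\alpha|_{[0,t]})=\dist(\alpha(0),\alpha(t))\}$. Since sub-arcs of minimizers are minimizers, $A$ is an interval containing $0$; it contains some $[0,\delta]$ because short geodesics minimize, and it is closed by continuity of $\dist$ and $\alpha$. Suppose $A=[0,T]$ with $T<\ell$. Using completeness, pick $s_n\downarrow0$ and minimizing geodesics $c_n$ from $\alpha(0)$ to $\alpha(T+s_n)$; their lengths $\dist(\alpha(0),\alpha(T+s_n))$ tend to $\dist(\alpha(0),\alpha(T))=T$, so after rescaling to a common domain a subsequence of the $c_n$ converges uniformly to a geodesic from $\alpha(0)$ to $\alpha(T)$, which by uniqueness is $\alpha|_{[0,T]}$. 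Hence, for large $n$, $c_n$ is $C^0$-close to $\alpha|_{[0,T+s_n]}$, has the same endpoints, and is strictly shorter, contradicting Theorem~\ref{t-saddle-surface}. Therefore $A=[0,\ell]$ and $\alpha$ is a global length minimizer.

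I expect the covering argument to be entirely routine; the delicate point is the third step, since Theorem~\ref{t-saddle-surface} only grants $C^0$-local minimality, which has to be bootstrapped to global minimality --- and the bootstrap above uses the uniqueness of geodesics obtained in the second step, so the steps must be taken in this order. As an alternative one could replace the third step by the Finsler Gauss lemma (see \cite{BCS}), arguing exactly as in the classical Cartan--Hadamard proof that the diffeomorphism $\exp_p$ carries radial segments to length minimizers.
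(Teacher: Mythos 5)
Your proof is correct and is precisely the argument the paper intends: the paper offers no details beyond the remark that ``the standard argument (similar to the proof of the Cartan--Hadamard theorem)'' derives the corollary from Theorem~\ref{t-saddle-surface}, and your three steps (no conjugate points from $C^0$-local minimality via the index form, $\exp_p$ a covering map and hence a diffeomorphism by simple connectedness, and the bootstrap from local to global minimality) are a faithful and correct expansion of exactly that argument. The only point deserving a word in step~3 is that the calibrator neighborhood $U$ furnished by the paper's proof for the fixed segment $\alpha|_{[0,\min(T+1,\ell)]}$ also calibrates every subsegment, so a single $U$ serves for all $n$ at once.
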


These results could make one think that Finsler
metrics of saddle surfaces have some special local properties
(such as non-positivity of some curvature-like invariants)
that imply this global properties.
However the following theorem shows
that this is not the case: every Finsler metric
(including positively curved Riemannian metrics)
can be locally realized as a metric of a saddle surface.

\begin{theorem}
\label{t-saddle-embedding}
Let $M$ be a two-dimensional Finsler manifold.
Then every point of $M$ has a neighborhood which admits
a saddle smooth isometric embedding
into a 4-dimensional normed space with a Minkowski norm.
\end{theorem}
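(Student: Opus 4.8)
The plan is to work in a coordinate chart and to produce the surface and the ambient norm essentially by hand, exploiting the splitting $\R^4=\R^2\oplus\R^2$: the first factor carries the chart, the second is used to ``spread out'' a family of $2$-planes onto which the given Finsler metric is pulled back. So I would first localize: fix a point of $M$, a chart $U\subset\R^2$ around it with the point at the origin, and write the metric as a smooth family $\{\phi_p\}_{p\in U}$ of Minkowski norms on $\R^2$. It then suffices to find, for some neighborhood $U'\ni0$ (which may be shrunk at the end), a smooth saddle embedding $f\colon U'\to\R^4$ and a Minkowski norm $N$ on $\R^4$ with $N(df_p(v))=\phi_p(v)$ for all $p\in U'$, $v\in\R^2$.

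For the surface I would take $f(x,y)=\bigl((x,y),\,\Phi(x,y)\bigr)\in\R^2\times\R^2$ with $\Phi\colon U\to\R^2$ vanishing to second order at the origin. Its $2$-jet there is chosen to make $f$ strictly saddle: for $\Phi(x,y)=\bigl(\tfrac12(x^2-y^2),\,xy\bigr)$ a direct computation in the standard Euclidean structure on $\R^4$ (admissible, as the saddle condition is affine invariant) gives, for every nonzero normal $\nu=(\nu_1,\nu_2,\nu_3,\nu_4)$, a second fundamental form of determinant $-(\nu_3^2+\nu_4^2)<0$; since strict saddleness is open, any higher-order perturbation of $\Phi$ keeps $f$ strictly saddle near the origin, so the higher-order jet of $\Phi$ remains free. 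For such an $f$ the tangent planes $P_p:=df_p(\R^2)$ are pairwise distinct, and the map $\Psi\colon U\times(\R^2\setminus\{0\})\to\R^4\setminus\{0\}$, $\Psi(p,v):=df_p(v)$, is a local diffeomorphism (its Jacobian determinant equals $|v|^2$ for the quadratic model and stays nonzero under small perturbations), hence, after shrinking $U$, a diffeomorphism onto an open cone $W$. I would then define $\widetilde\phi\colon W\to\R$ by $\widetilde\phi(\Psi(p,v))=\phi_p(v)$; it is positive, $1$-homogeneous, centrally symmetric, smooth, and by construction its restriction to each $P_p$ pulls back to $\phi_p$. Thus any Minkowski norm on $\R^4$ agreeing with $\widetilde\phi$ near the relevant portion of $\{\widetilde\phi=1\}$ induces $\phi$ along $f$.

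The remaining, and hardest, task is to extend $\widetilde\phi|_W$ to a genuine Minkowski norm. Equivalently, one must show that the indicatrix hypersurface $\Sigma:=\{\widetilde\phi=1\}\cap W$ — the union over $p$ of the indicatrices of $\phi_p$ lying inside the planes $P_p$ — extends to the boundary of a centrally symmetric convex body in $\R^4$ with everywhere positive Gaussian curvature; its Minkowski functional is then $N$, and restricting $f$ to a small enough $U'$ puts the relevant part of $\Sigma$ on that boundary. The crux is that $\Sigma$ is itself strictly convex (positive definite second fundamental form). The direction tangent to the indicatrix curves is handled by the quadratic convexity of the $\phi_p$; the two transverse directions are governed by how the planes $P_p$ tilt, i.e.\ by the higher-order jet of $\Phi$. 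For $U$ small everything is controlled by the model at the origin, where the transverse part of the second fundamental form is an explicit expression, affine in the $3$-jet of $\Phi$, whose constant term is built from $\phi_0$, from the first $p$-derivative of $\phi_p$ at the origin, and from the $2$-jet of $\Phi$. I would then carry out a dimension count: the $3$-jet of $\Phi$ has enough free parameters to be chosen (for the given $\phi$) so that this expression is positive definite, and positivity persists for $U$ small. Once $\Sigma$ is strictly convex, a patch of it over a small $U'$ lies — by a standard convexity/compactness argument — on the boundary of the convex hull of the disks $\{\widetilde\phi\le1\}\cap P_p$, $p\in U'$; rounding and slightly modifying this hull away from $\Sigma$, while preserving the central symmetry it already has, yields the required smooth, positively curved, symmetric body.

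The main obstacle is exactly this convexity step. Unlike in Riemannian or Euclidean geometry, where a saddle surface has non-positive Gaussian curvature and hence some a priori control, here there is no such input, and a naive choice of $\Phi$ fails outright: if $\Phi$ is quadratic then the family $\{P_p\}$ varies affinely, $\Sigma$ is ruled by $2$-disks (flat disks when $\phi_0$ is Euclidean), so it is not strictly convex and no Minkowski norm restricts to it. Thus the higher-order geometry of the surface must be used and, moreover, tuned against the prescribed metric — the non-quadratic features of the norms $\phi_p$, which no infinitesimal tilt of a $2$-plane can reproduce, have to be recovered from the genuine, finite variation of the planes $P_p$. Verifying that the freedom in the jet of $\Phi$ always suffices to force $\Sigma$ strictly convex, for an arbitrary given Finsler metric, is the technical heart of the argument.
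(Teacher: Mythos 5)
Your overall strategy coincides with the paper's: build a saddle embedding $F$, look at the image of the unit sphere bundle under $dF$, show it is a strictly convex hypersurface, and extend that hypersurface to the unit sphere of a Minkowski norm (your sketch of the extension step matches the paper's Lemma \ref{l-convex-extension}). The fatal problem is your choice of $F$ as a graph $(x,y)\mapsto\bigl((x,y),\Phi(x,y)\bigr)$ over the identity chart, with only the jet of $\Phi$ left free. With this choice the first two coordinates of $df_p(v)$ equal $v$ and do not depend on $p$ at all. Take a constant metric $\phi_p\equiv\|\cdot\|$ (a perfectly legitimate Finsler metric, and also the model to which the general case is reduced by blow-up). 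Then for each fixed unit vector $v_0$ the set $\{df_p(v_0):p\in U\}=\{(v_0,d\Phi_p(v_0)):p\in U\}$ is an \emph{open two-dimensional} subset of the affine $2$-plane $\{v_0\}\times\R^2$ --- open because $p\mapsto d\Phi_p(v_0)$ is an immersion, which is forced by your own requirement that $\Psi$ be a local diffeomorphism. Hence $\Sigma$ contains a flat $2$-disk through each of its points, its second fundamental form is degenerate everywhere, and no quadratically convex hypersurface --- hence no Minkowski norm --- can contain it. No choice of the higher jet of $\Phi$ repairs this: changing $\Phi$ moves the fiber within the plane $\{v_0\}\times\R^2$ but never out of it, because the first two coordinates are frozen. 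So the proposed ``dimension count'' on the $3$-jet of $\Phi$ cannot succeed; the obstruction is structural, and your diagnosis that the quadratic $\Phi$ fails ``because the planes vary affinely'' misidentifies the culprit. (For non-constant metrics the flat disks degenerate to curves, but metrics arbitrarily close to constant ones then give nearly degenerate second fundamental forms with no sign control, so the general case is not saved either.)

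The missing idea is that the \emph{first} factor must be perturbed. The paper takes $F_\sigma(x,y)=\bigl(f_\sigma(x,y),\,x^2-y^2,\,xy\bigr)$ with $f_\sigma(x,y)=(1-\sigma^2x^2-\sigma^2y^2)\cdot(x-\sigma x^3,\,y-\sigma y^3)$: the cubic terms make $df_\sigma$ at $p$ contract the unit ball of $\phi_0$ strictly into its interior by an amount quadratic in $|p|$. Consequently the supporting functional at $dF_\sigma(0,v_0)$ can be taken of the form $L(u,w)=L_0(u)$, with $L_0$ supporting the indicatrix of $\phi_0$ at $v_0$, and the global pre-convexity inequality $L(q)\le L(p)-c|p-q|^2$ follows from an explicit estimate. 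The general metric is then handled by rescaling the chart so that it becomes $C^2$-close to a constant one, under which pre-convexity of the image survives. You would need to rebuild your construction along these lines.
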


\begin{remark}
Every $n$-dimensional Finsler manifold can be locally
isometrically embedded into a $2n$-dimensional
normed space with a Minkowski norm,
see \cite{Shen} and references therein.
Globally, every \textit{compact} Finsler manifold $M$
can be isometrically embedded in a finite dimensional
normed space $V$ but the dimension of $V$ cannot
be bounded above in terms of $\dim M$ and moreover
non-compact Finsler manifolds in general do not
admit such embeddings, see~\cite{BI93}.
\end{remark}

\begin{remark}
It is still not clear whether saddle surfaces
in \textit{3-dimensional} spaces are intrinsically
different from convex ones.
In other words, can a strictly saddle surface in a 3-dimensional
normed space (with a Minkowski norm) be locally isometric
to a strictly convex surface in another such space?

There might be obstructions to such isometries:
it seems that, unlike in the Riemannian case,
a generic Finsler metric does not admit any isometric
embeddings into 3-dimensional spaces.
So it would not be surprising that such an embedding,
if it exists at all, is essentially unique.
\end{remark}

The ``opposite'' to the class of saddle surfaces is
the class of convex surfaces. Convex surfaces
in $\R^3$ are the only surfaces such that all their affine
transformations are non-negatively curved, cf.~\cite{Shefel}.
The next theorem shows that geodesics on
complete convex surfaces in normed spaces
also possess properties typical for positive curvature.

\begin{theorem}
\label{t-convex-surface}
Let $V$ be a 3-dimensional normed space whose norm
is $C^1$-smooth and strictly convex.
Let $B\subset V$ be a convex set with nonempty interior
not containing straight lines
(in other words, $B$ is not a cylinder).
Then there are no geodesic lines in $\pd B$
(a geodesic line is a curve which is a shortest
path between any pair of its points).
\end{theorem}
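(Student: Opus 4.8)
The plan is to assume that $\gamma\colon\R\to\pd B$ is a geodesic line and derive that $B$ contains a straight line, contradicting the hypothesis. First one reduces to $B$ unbounded: if $B$ is bounded then $\pd B$ is compact, so its intrinsic metric has finite diameter, while $\dist_{\pd B}(\gamma(-t),\gamma(t))=2t\to\infty$. Hence $B$ is unbounded, its recession cone $C=\{v\in V:B+v\subseteq B\}$ is a nonzero closed convex cone, and since $B$ contains no line, $C$ is \emph{pointed} (that is, $C\setminus\{0\}$ lies in an open half-space). Moreover $\gamma$ must escape to infinity in both directions: otherwise it would return $V$-close to a previous point, hence $\pd B$-close by the local comparability of the intrinsic and extrinsic metrics on a convex surface, contradicting $\dist_{\pd B}(\gamma(s),\gamma(t))=|s-t|$.

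The core of the proof is a blow-down at infinity. A short convexity computation shows that every accumulation direction of $\gamma(t)/\|\gamma(t)\|$ as $t\to\pm\infty$ lies in $C$. Rescaling around $p:=\gamma(0)$, the convex bodies $R^{-1}(B-p)$ converge (uniformly on bounded sets, as $R\to\infty$) to $C$, and correspondingly the surfaces $R^{-1}(\pd B-p)$, together with their intrinsic metrics, converge to the ``boundary cone'' $\widehat C$, where $\widehat C=\pd C$ when $C$ is full-dimensional and $\widehat C=C$ otherwise; the apex of $\widehat C$ is the image $o=0$ of $p$. The rescaled arcs $R^{-1}\gamma|_{[-NR,NR]}$ are minimizing curves of length $2N$ through $o$; passing to subsequences and diagonalizing over $N$ produces a geodesic line $\sigma$ of $\widehat C$ passing through its apex. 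Thus it remains to show: the boundary cone of a pointed convex cone admits no geodesic line through its apex.

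This is where strict convexity of the norm (which is essential to the theorem) is used. Since $\widehat C$ is a cone, the segment $[0,x]$ lies in $\widehat C$ for every $x\in\widehat C$, so $\dist_{\widehat C}(0,x)=\|x\|$; minimality of $\sigma$, parametrized so that $\sigma(0)=0$, then forces $\|\sigma(t)\|=|t|$. A curve in a strictly convex normed space whose length equals the norm of the difference of its endpoints must be a straight segment, so each half of $\sigma$ is a straight ray and $\sigma=(\R^+v^-)\cup(\R^+v^+)$ for unit vectors $v^\pm:=\sigma(\pm1)\in\widehat C$. If $\widehat C=C$ (the cases $\dim C\le 2$), then $C$ is convex and $\dist_{\widehat C}=\|\cdot\|$, so minimality of $\sigma$ gives $\|v^--v^+\|=2=\|v^-\|+\|v^+\|$; strict convexity forces $v^+=-v^-$, hence $\sigma$ is the full line $\R v^+\subseteq C$, contradicting pointedness (translating back, $B$ would contain a line).

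It remains to handle $\widehat C=\pd C$ with $C$ full-dimensional and pointed. Here minimality of $\sigma$ says exactly that the apex lies on a shortest path of $\pd C$ between the two distinct unit vectors $v^\pm\in\pd C$, i.e.\ $\dist_{\pd C}(v^-,v^+)=2$, equivalently that the ``unit link'' $\pd C\cap\{\|\cdot\|=1\}$ has intrinsic diameter $2$ inside $\pd C$. The plan is to rule this out using pointedness: one chooses a hyperplane through the apex meeting $C$ only at the apex, slices $C$ to a bounded convex cross-section $K$, realizes $\pd C$ as the cone over the convex curve $\pd K$, and constructs for the given $v^-,v^+$ a competing path on $\pd C$ that goes \emph{around} the apex at a fixed nonzero level, of length strictly below $2$ — strict convexity of the norm being used to make the inequality strict and to exclude the borderline configuration. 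This ``shortcut around the apex'' estimate for an arbitrary pointed cone under an arbitrary strictly convex norm — the quantitative analogue of the classical fact that a pointed convex cone in Euclidean space has total vertex angle strictly less than $2\pi$ — is the step I expect to be the main obstacle. The $C^1$-smoothness of the norm is used only to keep the intrinsic geometry of $\pd B$ and of the limit cones regular (existence and stability of shortest paths), so that the blow-down convergence and the variational shortcutting are legitimate. Once this estimate is in place the contradiction is complete, and $\pd B$ carries no geodesic line.
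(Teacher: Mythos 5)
Your overall strategy is the same as the paper's: blow down $B$ to its asymptotic (recession) cone, transport the geodesic line to the cone, and derive a contradiction from the fact that no shortest path on the boundary of a sharp cone can pass through the apex. Your reduction of that last statement to ``the broken line $[v^-,0,v^+]$ is not a shortest path on $\pd C$'' (via $\|\sigma(t)\|=|t|$ and strict convexity forcing each half of $\sigma$ to be a straight ray) is correct and matches what the paper needs. But the step you yourself flag as ``the main obstacle'' --- producing, for a pointed cone $C$ in an arbitrary strictly convex $C^1$ normed space, a path on $\pd C$ from $v^-$ to $v^+$ strictly shorter than $\|v^-\|+\|v^+\|$ --- is exactly the paper's Lemma \ref{l-sharp-cone}, and it is the real content of the proof, not a routine estimate. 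The paper proves it by enclosing $C$ in a trihedral cone $K'=H_1^+\cap H_2^+\cap H_3^+$ cut out by supporting planes and running a first-variation argument along the edge direction $v\in H_1\cap H_2$: either $f(t)=\|p-vt\|+\|q-vt\|$ decreases for small $t<0$ (giving a two-edge planar shortcut $[p,-vt,q]$ on $\pd K'$), or one slides toward the third face and uses the \emph{strict} triangle inequality $\|v-v_1\|+\|v-v_2\|>\|v_1-v_2\|$ to beat $[p,0,q]$ to first order; a monotonicity lemma for perimeters of planar convex sets then transfers the shortcut from $K'$ back to $K$. Note that $C^1$-smoothness of the norm is used precisely here, to differentiate $f$ at $0$ --- not, as you suggest, merely for regularity of the intrinsic geometry. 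Your sketch (``go around the apex at a fixed nonzero level'') is essentially a restatement of what must be proved: the length of such a detour is not obviously strictly below $\|v^-\|+\|v^+\|$ for a general pointed cone and a general strictly convex norm, and obtaining the strict inequality is exactly where the work lies.

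A second, smaller gap: you assert that the rescaled surfaces converge ``together with their intrinsic metrics'' to the boundary cone and extract a limit geodesic line by diagonalization. The paper explicitly declines to prove convergence of intrinsic metrics of converging convex surfaces in a normed space (calling the available proofs cumbersome) and works around it: it never takes a limit of the geodesics, only of their endpoints $p_i,q_i\to p,q$, and it arranges the competing shortcut to lie in a plane so that length convergence reduces to the elementary planar statement of Lemma \ref{l-convex-curves}. If you keep your route you must either prove the intrinsic-metric convergence (including lower semicontinuity of length and existence of the limit geodesic on the possibly nonsmooth cone) or adopt a similar planar-section workaround.
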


The rest of the paper is organized as follows.
Theorems \ref{t-saddle-surface},
\ref{t-saddle-embedding} and \ref{t-convex-surface} are proved in sections
\ref{sec-saddle}, \ref{sec-embedding} and \ref{sec-convex},
respectively.
These sections are completely independent from one another
and each section introduces its own notation.

The proofs are mostly elementary although some parts
involve cumbersome computations.
We do not use any machinery of contemporary Finsler
geometry (beyond things like the geodesic equation
in Section \ref{sec-saddle}).
In fact, as shown by Theorem \ref{t-saddle-embedding},
this machinery would be useless here.

\section{Geodesics on saddle surfaces}
\label{sec-saddle}

The goal of this section is to prove Theorem \ref{t-saddle-surface}.

\subsection{Preliminaries and notation}
We consider a finite dimensional vector space $V$
with a Minkowski norm denoted by $\Phi$.
As usual $V^*$ denotes the dual space (that is
the space of linear functions from $V$ to $\R$).
By $\langle,\rangle$ we denote the standard pairing
between $V^*$ and $V$, that is, $\langle L,v\rangle = L(v)$
for $L\in V^*$, $v\in V$.

The dual space $V^*$ carries the dual norm $\Phi^*$
given by
$$
 \Phi^*(L) = \sup \{ \langle L,v\rangle : \Phi(v) =1 \} ,
$$
this dual norm is also smooth and quadratically
convex.
The above supremum is attained at a unique vector
from the unit sphere of $\Phi$, the direction of
this vector is referred to as the direction of maximal growth,
or the \textit{gradient direction}, of~$L$.

For a $C^1$ function $f\colon V\to\R$ and $x\in V$,
we denote by $df(x)$ the derivative of $f$ at~$x$.
This is an element of $V^*$; in our notation,
the derivative of $f$ at $x$ along a vector $v\in V$
is written as $\langle df(x),v\rangle$.
If $df(x)\ne 0$, then the \textit{gradient direction}
of $f$ at $x$ is defined as that of the co-vector $df(x)$.

\smallskip
The \textit{Legendre transform} of $\Phi$ is the map
$\L_\Phi\colon V\to V^*$ defined by
$$
 \L_\Phi(v) = \tfrac12 d\Phi^2(v) .
$$
One easily sees that this map features the following properties:

(i) it is positively homogeneous:
$\L_\Phi(tv)=t\L_\Phi(v)$ for all $v\in V$ and $t\ge 0$;

(ii) if $\Phi(v)=1$, then $\L_\Phi(v)$ is the unique linear
function $L\in V^*$ such that $\Phi^*(L)=1$ and $\langle L,v\rangle=1$;

(iii) $\L_\Phi$ preserves the norm: $\Phi^*(\L_\Phi(v))=\Phi(v)$ for all $v\in V$;

(iv) $\L_\Phi$ is a diffeomorphism between $V\setminus\{0\}$
and $V^*\setminus\{0\}$, in particular, it is a diffeomorphism
between the unit spheres of $\Phi$ and $\Phi^*$;

(v) the inverse Legendre transform $\L_\Phi^{-1}$ coincides
with the Legendre transform $\L_{\Phi^*}$
(as usual, we identify $V^{**}$ with $V$).

We use these properties without explicitly referring to them.

\smallskip

Let $\gamma\colon I\to V$, where $I\subset\R$ is an interval,
be a smooth unit-speed curve (that is, $\Phi(\dot\gamma)\equiv 1$).
The co-vector
$$
 K_\gamma(t) := \frac d{dt} \L_\Phi(\dot\gamma(t))
$$
is referred to as the \textit{curvature co-vector} of $\gamma$ at $t$.
(This co-vector takes the role of the curvature vector in the
first variation formula.) A curve $\gamma$ lying on a
smooth submanifold $M\subset V$ is a geodesic in $M$
if an only if $K_\gamma(t)$ annihilates the tangent
space $T_{\gamma(t)}M\subset V$
for all $t$ (that is, $\langle K_\gamma(t),v\rangle=0$ for
all $v\in T_{\gamma(t)}M$).

\smallskip
For a Finsler metric $\phi$ on a manifold $M$,
the notations $\phi^*$ and $\L_\phi$ denote
the fiber-wise dual norm and the fiber-wise
Legendre transform; $\phi^*$ is a function on $T^*M$
and $\L_\phi$ is a map from $TM$ to $T^*M$.
Note that, if $M\subset V$ is a smooth submanifold,
$\phi$ is the induced Finsler metric on $M$, 
$x\in M$ and $v\in T_xM$, then $\L_\phi(v)=\L_\Phi|_{T_xM}$.

\subsection{Calibrators}

Let $S\colon M\to V$ be a saddle surface
and $\gamma\colon[a,b]\to M$ a geodesic of the
induced Finsler metric $\phi$ on $M$.
We are going to prove that $\gamma$ minimizes length
among $C^0$-nearby curves. It suffices to do this assuming
that $\gamma$ is embedded (that is, has
no self-intersections in $M$).
Indeed, to reduce the general case to the special case when $\gamma$ is embedded,
construct an immersion
$$
f\colon(a-\ep,b+\ep)\times(-\ep,\ep)\to M
$$
such that $f(t,0)=\gamma(t)$ for all $t\in[a,b]$ and apply the special case
to the induced metric $f^*\phi$ on $(a-\ep,b+\ep)\times(-\ep,\ep)$ and the
geodesic $t\mapsto (t,0)$ there.

Throughout the rest of the proof we assume that $\gamma$ is embedded
and extended (as an embedded geodesic) to an interval $(a-\ep,b+\ep)$.
We abuse notation and denote the image $\gamma(a-\ep,b+\ep)\subset M$
by the same letter $\gamma$.

\begin{df}
%\label{d-calibrator}
Let $U\subset M$ be a neighborhood of $\gamma([a,b])$.
A map $h\colon U\to\R$ is said to be a \textit{calibrator}
for $\gamma$ if the following holds:

(i) $h(\gamma(t))=t$ for all $t\in(a-\ep,b+\ep)$ such that $\gamma(t)\in U$;

(ii) $\phi^*(dh(x))\le 1$ for all $x\in U$.
\end{df}

If there is a calibrator for $\gamma$ defined on a neighborhood $U$,
then $\gamma|_{[a,b]}$ is a unique shortest path
in $U$ between $\gamma(a)$ and $\gamma(b)$.
Indeed, let $\gamma_1\colon[c,d]\to U$ be a piecewise smooth path
with the same endpoints. Then
$$
\begin{aligned}
 \len(\gamma_1)
 &=\int_c^d \phi(\dot\gamma_1(t))\,dt
 \ge \int_c^d\langle dh(\gamma_1(t)),\dot\gamma_1(t)\rangle\,dt
 = \int_c^d \frac d{dt} h(\gamma_1(t))\,dt \\
 &= h(\gamma_1(d))-h(\gamma_1(c))= h(\gamma(b))-h(\gamma(a))= b-a
 = \len(\gamma|_{[a,b]}) .
\end{aligned}
$$
Here we used the fact that $\phi^*(dh(x))\le 1$ for all $x\in U$
and hence $\langle dh(x),v\rangle \le \phi(v)$ for all $v\in T_xM$.

\begin{df}
\label{d-almost-calibrator}
Let $U\subset M$ be a neighborhood of $\gamma([a,b])$.
A map $h\colon U\to\R$ is said to be an \textit{almost calibrator}
for $\gamma$ if the following holds:

(i) $h(\gamma(t))=t$ for all $t\in(a-\ep,b+\ep)$ such that $\gamma(t)\in U$;

(ii) $\phi^*(dh(x))\le 1 + o(\dist(x,\gamma)^2)$ as $\dist(x,\gamma)\to 0$.
\end{df}

\begin{lemma}
\label{l-almost-calibrator}
If $\gamma$ admits an almost calibrator, then $\gamma|_{[a,b]}$
is a shortest path in some neighborhood of its image.
\end{lemma}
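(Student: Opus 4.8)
The plan is to turn the almost calibrator $h$ into a first-order calibration inequality along competitor curves, carrying a genuine positive quadratic gain supplied by quadratic convexity of $\phi$, and then to let a scalar Poincar\'e inequality make this gain outweigh the $o(\dist^2)$ defect in $\phi^*(dh)$. It suffices to produce a tubular neighborhood $W\subset U$ of $\gamma([a,b])$ such that every piecewise smooth curve $\gamma_1$ in $W$ from $\gamma(a)$ to $\gamma(b)$ satisfies $\len(\gamma_1)\ge b-a=\len(\gamma|_{[a,b]})$. To set up adapted coordinates near $\gamma$: from $h(\gamma(t))=t$ we have $\langle dh(\gamma(t)),\dot\gamma(t)\rangle=1$, and together with $\phi^*(dh)\le1$ on $\gamma$ (Definition~\ref{d-almost-calibrator}(ii) at distance $0$) this forces $\phi^*(dh(\gamma(t)))=1$ and $dh(\gamma(t))=\L_\phi(\dot\gamma(t))$, so $dh\ne0$ near $\gamma([a,b])$. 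Let $X(x):=\L_\phi^{-1}(dh(x))/\phi^*(dh(x))$ be the fiberwise $\phi$-unit vector field of gradient directions of $h$; then $\gamma$ is an integral curve of $X$. Flowing a short transversal along $X$ (a flow-box construction) produces coordinates $(t,z)$ on a tubular neighborhood of $\gamma([a,b])$ in which $\gamma=\{z=0\}$ and the coordinate field $\partial_t$ equals $X$; in particular $\langle dz,X\rangle\equiv0$ throughout the chart and $\dist(\cdot,\gamma)\le C|z|$ there. Fix a nondecreasing $\eta$ with $\eta(\rho)=o(\rho^2)$ and $\phi^*(dh(x))\le1+\eta(\dist(x,\gamma))$ near $\gamma$.

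The key step is the pointwise estimate: for $x$ in a thin enough subtube $W$ and every $v\in T_xM$ with $\phi(v)=1$,
\[
 \langle dh(x),v\rangle\le 1+\eta\bigl(\dist(x,\gamma)\bigr)-c_1\,\langle dz(x),v\rangle^2 ,\qquad c_1>0 .
\]
To obtain it, quadratic convexity of $\phi$ gives, uniformly on compacta, $\langle dh(x),v\rangle\le\phi^*(dh(x))-\kappa\,\phi(v-X(x))^2$ for $\phi(v)=1$ (the functional $\langle dh(x),\cdot\rangle$ attains its maximum on the $\phi$-unit sphere non-degenerately at $X(x)$); then $\langle dz(x),X(x)\rangle=0$ forces $|\langle dz(x),v\rangle|=|\langle dz(x),v-X(x)\rangle|\le C'\,\phi(v-X(x))$; finally one substitutes $\phi^*(dh(x))\le1+\eta(\dist(x,\gamma))$. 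Choosing $z$ as a first integral of $X$ is essential here: with a naive transverse coordinate one picks up an extra $+O(\dist^2)$ term, because $X(x)$ ceases to be $dz$-horizontal off $\gamma$, and it has the same order $\dist^2$ as the gain $c_1\langle dz,v\rangle^2$, after which the argument breaks down.

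Finally, shrink $W$ to $\{a-\mu<t<b+\mu,\ |z|<r\}$ and let $\gamma_1\colon[0,L]\to W$ be parametrized by arc length, from $\gamma(a)$ to $\gamma(b)$; put $\zeta(s)=z(\gamma_1(s))$, so that $\zeta(0)=\zeta(L)=0$ and $\zeta'(s)=\langle dz(\gamma_1(s)),\dot\gamma_1(s)\rangle$. Integrating the pointwise estimate along $\gamma_1$ and using $\int_0^L\langle dh(\gamma_1),\dot\gamma_1\rangle\,ds=h(\gamma(b))-h(\gamma(a))=b-a$ gives
\[
 b-a\le L+\int_0^L\eta\bigl(\dist(\gamma_1(s),\gamma)\bigr)\,ds-c_1\int_0^L\zeta'(s)^2\,ds .
\]
Since $\dist(\gamma_1(s),\gamma)\le C|\zeta(s)|$ and $\eta(\rho)=o(\rho^2)$, shrinking $r$ makes the middle integral $\le\delta\int_0^L\zeta^2\,ds$ with $\delta>0$ as small as we please; fix $\delta:=c_1\pi^2/\bigl(2(b-a)^2\bigr)$. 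The Poincar\'e inequality $\int_0^L(\zeta')^2\ge\pi^2L^{-2}\int_0^L\zeta^2$ (valid since $\zeta$ vanishes at $0$ and $L$) then yields $b-a\le L+\bigl(\delta-c_1\pi^2L^{-2}\bigr)\int_0^L\zeta^2\,ds$; if $L<b-a$ the bracketed coefficient is negative, so the right-hand side is $\le L<b-a$, a contradiction, whence $L\ge b-a$ and $\gamma|_{[a,b]}$ minimizes length in $W$. The only genuinely delicate point of the argument is the pointwise estimate above — in particular the need to annihilate the $O(\dist^2)$ term through the gradient-flow choice of $z$; the uniform quadratic-convexity bound and the one-dimensional Poincar\'e inequality are routine.
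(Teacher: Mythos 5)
Your proof is correct, but the endgame is genuinely different from the paper's. The shared core: both arguments hinge on choosing the transverse function ($z$ in yours, $f$ in the paper's) to be a first integral of the gradient flow $\L_\phi^{-1}(dh)$, so that the quadratic gain supplied by quadratic convexity is not polluted by a first-order cross term of the same order $O(\dist(x,\gamma)^2)$ as the defect — you correctly flag this as the delicate point. Your pointwise estimate $\langle dh(x),v\rangle\le\phi^*(dh(x))-\kappa\,\phi(v-X(x))^2$ for $\phi(v)=1$ is the dual formulation of the paper's inequality $\phi^*(dh(x)+\xi)\le\phi^*(dh(x))+C\|\xi\|^2$ for co-vectors $\xi$ annihilating the gradient direction. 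Where you part ways: the paper upgrades $h$ to an exact calibrator $g=(1-\sigma f^2)\,h$ for small $\sigma>0$ and then reuses the already-established calibration argument — a purely pointwise, static fix of the one-form $dh$; you never produce a calibrator, but instead integrate the defect along an arbitrary competitor and beat the accumulated $o(\dist^2)$ error with the one-dimensional Poincar\'e inequality $\int_0^L(\zeta')^2\ge\pi^2L^{-2}\int_0^L\zeta^2$ applied to the transverse displacement. The paper's route stays local and yields a genuine calibrator (hence uniqueness of the minimizer essentially for free); yours is global along the curve, with the tube width depending on $\len(\gamma)=b-a$ through the choice of $\delta$, but it bypasses the second appeal to the calibration lemma and makes the underlying mechanism — divergence of gradient trajectories outweighing the defect — explicit at the level of each competitor. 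Both are complete proofs of the statement.
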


\begin{proof}
By the definition of almost calibrator,
we have $\langle dh(\gamma(t)),\dot\gamma(t)\rangle=1$
and $\phi^*(dh(\gamma(t))\le 1$ for all $t$.
Hence $\phi^*(dh(\gamma(t))=1$
and $dh(\gamma(t))=\L_\phi(\dot\gamma(t))$.
We may assume that $dh\ne 0$ on $U$.

Define a vector field $V$ on $U$ by
$$
V(x)=\L_\phi^{-1}(dh(x))=\L_{\phi^*}(dh(x)), \qquad x\in U .
$$
For any co-vector $\xi\in T^*M$ such that $\langle \xi,V(x)\rangle$,
the derivative of $\phi^*_x$ at $dh(x)\in T^*M$ along $\xi$
equals zero (this follows from the definition of the Legendre
transform $\L_{\phi^*}$). Therefore
\begin{equation}
\label{e-xi}
 \phi^*(dh(x)+\xi) \le \phi^*(dh(x)) + C\|\xi\|^2
\end{equation}
for some constant $C$, all $x\in U$ sufficiently close to $\gamma([a,b])$,
and all $\xi\in T^*M$ such that $\langle\xi,V(x)\rangle=0$.

Recall that $V(\gamma(t))=\dot\gamma(t)$ for all $t$,
so $\gamma$ is a trajectory of $V$.
Hence if $U$ is sufficiently small, there is a smooth map
$f\colon U\to\R$ such that $df\ne 0$ and $f$ is constant along the
trajectories of $V$ or, equivalently,
$\langle df(x), V(x)\rangle=0$ for all $x\in U$.
We may assume that $f=0$ on $\gamma$,
then
$$
  c\cdot\dist(x,\gamma)\le f(x)\le C\cdot\dist(x,\gamma)
$$
for some constants $c,C>0$ and all $x\in U$.
Define a function $g\colon U\to\R$ by
$$
 g(x) = \left(1-\sigma f(x)^2\right)\cdot h(x)
$$
for a small $\sigma>0$. Note that $g=f$ on $\gamma$. We have
$$
 dg(x) = \left(1-\sigma f(x)^2\right) 
 \left( dh(x) - \frac{2\sigma f(x)}{1-\sigma f(x)^2}\cdot df(x) \right) .
$$
Since $\langle df(x), V(x)\rangle=0$, we can apply
\eqref{e-xi} to
$$
 \xi = - \frac{2\sigma f(x)}{1-\sigma f(x)^2}\cdot df(x) .
$$
This yields
\begin{equation}
\label{e-xi1}
 \phi^*(dg(x)) \le \left(1-\sigma f(x)^2\right)\cdot\phi^*(dh(x))
 + C\cdot\frac{4\sigma^2 f(x)^2}{1-\sigma f(x)^2}\cdot \|df(x)\|^2.
\end{equation}
We may assume that $U$ is so small that $\sigma f(x)^2<1/2$
for all $x\in U$ and $\|df\|$ is bounded on~$U$.
Then the second summand in \eqref{e-xi1}
is bounded above by $C_1\sigma^2 f(x)^2$ for some constant $C_1>0$.
By the assumption (ii) of Definition \ref{d-almost-calibrator},
we have
$$
 \phi^*(dh(x)) \le 1 + o(\dist(x,\gamma)^2) = 1 + o(f(x)^2), \qquad \dist(x,\gamma)\to 0.
$$
Hence we have the following estimate for the first summand in \eqref{e-xi1}:
$$
 (1-\sigma f(x)^2)\cdot\phi^*(dh(x)) \le 1-\tfrac12\sigma f(x)^2
$$
for all $x$ sufficiently close to $\gamma$.
Thus \eqref{e-xi1} implies that
$$
 \phi^*(dg(x)) \le 1-\tfrac12\sigma f(x)^2 + C_1\sigma^2 f(x)^2
 = 1 - \tfrac12\sigma f(x)^2 (1-2C_1\sigma)
$$
for all $x$ from a neighborhood $U'\subset U$ of $\gamma([a,b])$.
Hence $\phi^*(dg(x))\le 1$ for all $x\in U'$
provided that $\sigma<(2C_1)^{-1}$.
Thus $g$ is a calibrator for $\gamma$ in $U'$,
therefore $\gamma|_{[a,b]}$ is a shortest path in $U'$.
\end{proof}

\subsection{The construction}
Our goal is to construct an almost calibrator $h$ for an embedded
geodesic $\gamma$ on our saddle surface.
Recall that our surface is parameterized by $S\colon M\to V$.
Let $\gamma_S=S\circ\gamma$.
We define $h\colon U\to\R$, where $U$ is a neighborhood of $\gamma([a,b])$,
by the following implicit relation:
the value $h(x)$ is a parameter $t\in (a-\ep,b+\ep)$ such that
\begin{equation}
\label{e-hdef}
\langle \L_\Phi(\dot\gamma_S(t)),S(x)-\gamma_S(t)\rangle = 0 .
\end{equation}
Observe that for $x=\gamma(t)$ this equation is satisfied
and the derivative of its left-hand side with respect to $t$
is nonzero (more precisely, it equals $-1$).
Hence by the Implicit Function Theorem there exists
a neighborhood $U$ of $\gamma$ and a unique smooth function
$h\colon U\to\R$ such that $h(\gamma(t))=t$ for all $t$
and \eqref{e-hdef} holds for every $x\in U$ and $t=h(x)$.

We are going to show that $h$ is an almost calibrator for $\gamma$.
The first requirement of Definition \ref{d-almost-calibrator}
is immediate from the construction.
The second requirement is local; it suffices to verify it
in a small neighborhood of every point of $\gamma$.
Therefore we may assume that our surface is embedded and
identify $M$ with its image in the space.
That is, $M=U$ is a submanifold of $V$
and $S$ is the inclusion map $M\to V$. Then \eqref{e-hdef}
takes the form
\begin{equation}
\label{e-hdef-simple}
 \langle \L_\Phi(\dot\gamma(t)),x-\gamma(t)\rangle = 0
\end{equation}
where $x\in M\subset V$, $t=h(x)$.

\subsubsection*{Riemannian case}
Before proving that $h$ is an almost calibrator, we briefly
explain why this is true in the case when the ambient space is Euclidean.
First observe that the condition (ii) in the definition
of almost calibrator depends only on the derivatives of $h$
at $\gamma$ up to the second order.
By \eqref{e-hdef-simple}, every level set $h^{-1}(t)$ of $h$
is the intersection of $M$ with the hyperplane orthogonal
to $\gamma$ at $\gamma(t)$. This normal section of the surface
has zero geodesic curvature at $\gamma(t)$, therefore it
suffices to prove the result for a similar function whose
level sets are geodesics orthogonal to~$\gamma$.
Since the Gaussian curvature of the surface in nonpositive,
these geodesics diverge from one another, hence the distance
between level sets is minimal at the base curve $\gamma$.
This implies that the norm of the derivative of our function
attains its minimum (equal to~1) at $\gamma$, hence the result.

\subsection{Computations}

\begin{lemma}
\label{l-diff0}
Let $x_0=\gamma(t_0)$ where $t_0\in(a-\ep,b+\ep)$.
Then
$$
dh(x_0)=\L_\Phi(\dot\gamma(t_0))|_{T_{x_0}M}=\L_\phi(\dot\gamma(t_0))
$$
and therefore $\phi^*(dh(x_0))=1$.
\end{lemma}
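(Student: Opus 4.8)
The plan is to differentiate the defining relation \eqref{e-hdef-simple} at the base point $x_0 = \gamma(t_0)$. Fix $t_0$ and write $L_0 = \L_\Phi(\dot\gamma(t_0)) \in V^*$. The function $h$ is characterized by the identity
$$
 \langle \L_\Phi(\dot\gamma(h(x))),\, x - \gamma(h(x))\rangle = 0, \qquad x \in M .
$$
First I would take the derivative of the left-hand side with respect to $x$ at $x = x_0$, applied to an arbitrary tangent vector $v \in T_{x_0}M$. The term coming from differentiating $x$ directly contributes $\langle L_0, v\rangle$. Every other term carries a factor of the derivative of $h$ (via the chain rule), and is therefore multiplied either by the vector $\dot\gamma(t_0) = \frac{d}{ds}\big|_{s=t_0}\gamma(s)$ or by $\frac{d}{ds}\big|_{s=t_0}\L_\Phi(\dot\gamma(s)) = K_{\gamma_S}(t_0)$; but at $x = x_0$ we have $x - \gamma(h(x)) = 0$, so the term involving $K_{\gamma_S}(t_0)$ is killed, and the only surviving correction is $-\langle L_0, \dot\gamma(t_0)\rangle \cdot \langle dh(x_0), v\rangle = -\langle dh(x_0), v\rangle$, using property (ii) of the Legendre transform which gives $\langle L_0, \dot\gamma(t_0)\rangle = 1$. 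Setting the total derivative to zero yields $\langle L_0, v\rangle - \langle dh(x_0), v\rangle = 0$ for all $v \in T_{x_0}M$, i.e.\ $dh(x_0) = L_0|_{T_{x_0}M}$, which is the first claimed equality.

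The second equality, $L_0|_{T_{x_0}M} = \L_\phi(\dot\gamma(t_0))$, is just the last sentence of the Preliminaries subsection: the fiber-wise Legendre transform of the induced metric $\phi$ is the restriction of $\L_\Phi$ to the tangent space. Finally, to get $\phi^*(dh(x_0)) = 1$, I would invoke property (iii) together with this restriction identity: $\L_\phi(\dot\gamma(t_0)) = \L_\Phi(\dot\gamma(t_0))|_{T_{x_0}M}$ has $\phi^*$-norm equal to $\phi(\dot\gamma(t_0)) = \Phi(\dot\gamma_S(t_0)) = 1$ since $\gamma$ (hence $\gamma_S$) is parameterized by arc length. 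Alternatively one can argue directly: by property (ii), $L_0$ is a unit covector with $\langle L_0,\dot\gamma(t_0)\rangle = 1$, and its restriction to $T_{x_0}M$ still satisfies $\langle dh(x_0),\dot\gamma(t_0)\rangle = 1$ while having $\phi^*$-norm at most $1$ (restriction does not increase the dual norm), forcing equality.

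There is no serious obstacle here; the lemma is essentially a bookkeeping exercise in the Implicit Function Theorem. The one point requiring a little care is making sure that when differentiating the implicit relation all the chain-rule terms that are not manifestly zero are correctly identified, and in particular that the term with $\frac{d}{dt}\L_\Phi(\dot\gamma(t)) = K_{\gamma_S}(t)$ genuinely drops out because it is paired against $x_0 - \gamma(t_0) = 0$. This is exactly the observation already flagged right after \eqref{e-hdef}, namely that the $t$-derivative of the left-hand side of \eqref{e-hdef} equals $-1$ at the base curve; the computation here is the $x$-derivative version of the same identity.
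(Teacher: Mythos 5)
Your proposal is correct and follows exactly the paper's argument: differentiate the implicit relation \eqref{e-hdef-simple} along $v\in T_{x_0}M$, note that the term involving $\frac{d}{dt}\L_\Phi(\dot\gamma(t))$ vanishes because it is paired against $x_0-\gamma(t_0)=0$, and use $\langle \L_\Phi(\dot\gamma(t_0)),\dot\gamma(t_0)\rangle=1$ to isolate $\langle dh(x_0),v\rangle$. The concluding steps (restriction identity for $\L_\phi$ and $\phi^*(\L_\phi(v))=\phi(v)$) also match the paper.
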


\begin{proof}
Recall that $h(x_0)=t_0$. By \eqref{e-hdef-simple} we have
$$
 \langle \L_\Phi(\dot\gamma(h(x))),x-\gamma(h(x))\rangle = 0
$$
for all $x\in M$.
Differentiate this identity at $x=x_0$ along a vector
$v\in T_{x_0}M$.
Since the second term $x-\gamma(h(x))$ of the above product
is zero for $x=x_0$, the derivative of the first term
cancels out, and the differentiation yields
$$
 \langle \L_\Phi(\dot\gamma(t_0)),v-\dot\gamma(t_0) h'_v\rangle = 0
$$
where $h'_v$ is the derivative of $h$ at $x_0$ along $v$,
that is $h'_v = \langle dh(x_0), v\rangle$.
Since $\langle \L_\Phi(\dot\gamma(t_0)),\dot\gamma(t_0)\rangle=1$,
it follows that
$h'_v = \langle \L_\Phi(\dot\gamma(t_0)),v\rangle$.
Since $v$ is an arbitrary vector from $T_{x_0}M$, it follows that
$$
dh(x_0)=\L_\Phi(\dot\gamma(t_0))|_{T_{x_0}M} .
$$
Since $\dot\gamma(t_0)$ is tangent to the surface, the right-hand side
equals $\L_\phi(\dot\gamma(t_0))$.
The identity $\phi^*(dh(x_0))=1$ now follows from the fact
that $\phi^*(\L_\phi(v))=\phi(v)$ for every $v\in TM$.
\end{proof}

Now we introduce a special coordinate system $(t,s)$
in a neighborhood of $\gamma$. The $s$-coordinate lines
of this system are level curves of~$h$. The $t$-coordinate lines
are ``gradient curves'' of $h$ (that is, curves tangent to
the vector field $\L_\phi^{-1}(dh)$), in particular,
$\gamma$ itself is the $t$-coordinate line corresponding to $s=0$.

More precisely, let $r\colon(a-\ep,b+\ep)\times(-\ep,\ep)\to M\subset V$
be a local parameterization (whose argument is 
denoted by $(t,s)$) such that for all $(t,s)$ the following holds:

(1) $r(t,0)=\gamma(t)$;

(2) $h(r(t,s))=t$;

(3) the first partial derivative $r'_t$ of $r$ at $(t,s)$ is proportional
to the vector $\L_\phi^{-1}(dh(x))$ where $x=r(t,s)$.

Lemma \ref{l-diff0} ensures that these conditions are compatible.
The third condition means that the vector $r'_t$ points in the direction
of the maximal growth of $h$. Since the derivative of $h$ along this vector
equals~1 (by the second condition), it follows that
$$
 \phi^*(dh(x)) = \frac 1{\phi(r'_t(t,s))} = \frac 1{\Phi(r'_t(t,s))}
$$
for $x=r(t,s)$. Therefore the requirement (ii)
of Definition \ref{d-almost-calibrator} for $h$
is equivalent to the following:
$$
 \Phi(r'_t(t,s)) \ge 1 - o(s^2), \qquad s\to 0 .
$$
Denote
$$
\rho(t,s) = \Phi^2(r'_t(t,s)).
$$
Now it suffices to prove that
$$
\rho(t,s) \ge 1 - o(s^2), \qquad s\to 0 .
$$
By Lemma \ref{l-diff0} we have $\rho(t,0)=1$ for all $t$,
therefore it suffices to prove that
$\rho'_s(t,0)=0$ and $\rho''_{ss}(t,0)\ge 0$
for all $t$.

Fix $t_0\in(a-\ep,b-\ep)$ and let us verify that
$\rho'_s(t_0,0)=0$ and $\rho''_{ss}(t_0,0)\ge 0$.
We introduce the following notation:
$$
\begin{aligned}
 & x_0 = r(t_0,0) = \gamma(t_0), \\
 & v(t,s) = r'_t(t,s), \\
 & v_0 = v(t_0,0) = \dot\gamma(t_0), \\
 & L = \L_\Phi(v_0) = \L_\Phi(\dot\gamma(t_0)), \\
 & K = \tfrac d{dt}\big|_{t=t_0} \L_\Phi(\dot\gamma(t))
\end{aligned}
$$
Recall that $K\in V^*$ is the ``curvature co-vector''
of $\gamma$ at $t_0$ and it annihilates $T_xM$
(since $\gamma$ is a geodesic).
Using this notation, the definition of $\rho$ can be written as
$$
 \rho(t,s) = \Phi^2(v(t,s)) .
$$

\begin{lemma}
\label{l-Lv}
For all $s\in(-\ep,\ep)$ we have
\begin{align}
\label{e-Lv1}
&\langle L, v'_s(t_0,s) \rangle =  - \langle K, r'_s(t_0,s)\rangle , \\
\label{e-Lv1-0}
&\langle L, v'_s(t_0,0) \rangle =  0 , \\
\label{e-Lv2}
&\langle L, v''_{ss}(t_0,0) \rangle = - \langle K, r''_{ss}(t_0,0)\rangle .
\end{align}
\end{lemma}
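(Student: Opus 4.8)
The plan is to derive all three identities by differentiating, along the coordinate curves $s\mapsto r(t,s)$, the relation \eqref{e-hdef-simple} that defines $h$. Since $r(t,s)\in M$ and $h(r(t,s))=t$ for every $(t,s)$, applying \eqref{e-hdef-simple} to $x=r(t,s)$ gives the identity
\[
 \langle \L_\Phi(\dot\gamma(t)),\, r(t,s)-\gamma(t)\rangle = 0
\]
valid for all $(t,s)$ in the domain of $r$. It is convenient to abbreviate $\ell(t)=\L_\Phi(\dot\gamma(t))$, so that $\ell(t_0)=L$ and $\ell'(t_0)=K$, and to read the identity as $\langle\ell(t),\,r(t,s)-\gamma(t)\rangle\equiv 0$.

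First I would differentiate this in $s$. As $\gamma(t)$ is independent of $s$, this yields $\langle\ell(t),\,r'_s(t,s)\rangle\equiv 0$. Differentiating the latter in $t$ and then putting $t=t_0$ produces $\langle K,\,r'_s(t_0,s)\rangle+\langle L,\,\pd_t r'_s(t_0,s)\rangle=0$; since $r$ is smooth, $\pd_t r'_s=\pd_s r'_t=v'_s$, which is exactly \eqref{e-Lv1}. Setting $s=0$ in \eqref{e-Lv1} gives \eqref{e-Lv1-0}: indeed $r'_s(t_0,0)$ is a tangent vector to $M$ at $x_0$, and $K$ annihilates $T_{x_0}M$ because $\gamma$ is a geodesic, so the right-hand side vanishes. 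For \eqref{e-Lv2} I would differentiate $\langle\ell(t),\,r'_s(t,s)\rangle\equiv 0$ once more in $s$ to get $\langle\ell(t),\,r''_{ss}(t,s)\rangle\equiv 0$; differentiating this in $t$ and evaluating at $(t_0,0)$ gives $\langle K,\,r''_{ss}(t_0,0)\rangle+\langle L,\,\pd_t r''_{ss}(t_0,0)\rangle=0$, and $\pd_t r''_{ss}=\pd_s^2 r'_t=v''_{ss}$ again by smoothness of $r$.

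This computation presents no genuine difficulty. The only points needing attention are the bookkeeping of dependencies — $\gamma$ and $\ell$ are functions of $t$ alone, so their $s$-derivatives drop out — the interchange of the mixed partial derivatives of $r$, and invoking the geodesic property of $\gamma$ (equivalently, $\langle K,w\rangle=0$ for every $w\in T_{x_0}M$) at precisely the right step, namely the passage from \eqref{e-Lv1} to \eqref{e-Lv1-0}.
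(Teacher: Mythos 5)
Your proposal is correct and follows essentially the same route as the paper: both differentiate the defining identity $\langle \L_\Phi(\dot\gamma(t)), r(t,s)-r(t,0)\rangle = 0$ twice, invoke equality of mixed partials to identify $\pd_t r'_s$ with $v'_s$, and use the geodesic property ($K$ annihilates $T_{x_0}M$) exactly at the passage from \eqref{e-Lv1} to \eqref{e-Lv1-0}. The only (immaterial) difference is the order of differentiation — you take $\pd_s$ first, which lets the $t$-only terms drop out immediately, whereas the paper differentiates in $t$ first and cancels the constant $\langle \L_\Phi(\dot\gamma(t)),\dot\gamma(t)\rangle=1$ along the way.
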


\begin{proof}
The fact that $h(r(t,s))=t$ and \eqref{e-hdef-simple} imply that
$$
 \langle \L_\Phi(\dot\gamma(t)), r(t,s)-r(t,0) \rangle = 0
$$
for all $t,s$.
Differentiating this with respect to $t$ yields
$$
 \big\langle \tfrac d{dt}\L_\Phi(\dot\gamma(t)), r(t,s)-r(t,0) \big\rangle
 +\big\langle \L_\Phi(\dot\gamma(t)), r'_t(t,s)-r'_t(t,0) \big\rangle = 0 .
$$
Since
$$
 \langle \L_\Phi(\dot\gamma(t)), r'_t(t,0) \rangle
 = \langle \L_\Phi(\dot\gamma(t)), \dot\gamma(t) \rangle = 1,
$$
it follows that
$$
\big\langle \tfrac d{dt}\L_\Phi(\dot\gamma(t)), r(t,s)-r(t,0) \big\rangle
 +\big\langle \L_\Phi(\dot\gamma(t)), r'_t(t,s) \big\rangle - 1 = 0,
$$
or, equivalently,
$$
 \big\langle \L_\Phi(\dot\gamma(t)), v(t,s) \big\rangle
 = 1 - \big\langle \tfrac d{dt}\L_\Phi(\dot\gamma(t)), r(t,s)-r(t,0) \big\rangle .
$$
Substituting $t=t_0$ and using the definitions of $L$ and $K$
yields
$$
\langle L, v(t_0,s) \rangle = 1 - \langle K, r(t_0,s)-r(t_0,0)\rangle .
$$
Differentiating this with respect to $s$ yields \eqref{e-Lv1}.
Since $r'_s(t_0,0)$ is a tangent vector to $M$ at $x_0$,
we have $\langle K, r'_s(t_0,0)\rangle=0$, hence
substituting $s=0$ into \eqref{e-Lv1} yields \eqref{e-Lv1-0}.
Finally, differentiating \eqref{e-Lv1} with respect to $s$
at $s=0$ yields \eqref{e-Lv2}.
\end{proof}

Recall that
$$
 L = \L_\Phi(v_0) = \tfrac12 d\Phi^2(v_0)
$$
by the definitions of $L$
and Legendre transform.
Now we can verify that ${\rho'_s(t_0,0)=0}$:
$$
\rho'_s(t_0,0) = \tfrac d{ds}\big|_{s=0} \Phi^2(v(t_0,s)) 
= \langle d\Phi^2(v_0), v'_s(t_0,0)\rangle
= 2 \langle L, v'_s(t_0,0)\rangle = 0
$$
(the last identity follows from \eqref{e-Lv1-0}).

Define a quadratic form $Q$ on $V$ by 
$$
 Q=\tfrac12 d^2\Phi^2(v_0)
$$
(this is the second derivative at $v_0$
of the function $v\mapsto \frac12\Phi^2(v)$ on $V$).
Since $\Phi$ is a quadratically convex norm,
$Q$ is positive definite.
We use $Q$ as an auxiliary Euclidean structure on $V$.

From the definitions, for any $w\in V$ we have
$$
 \langle K, w\rangle 
 = \big\langle \tfrac d{dt}\big|_{t=t_0}\L_\Phi(\dot\gamma(t)), w \big\rangle
 = Q(\ddot\gamma(t_0),w)
$$
since $\L_\Phi(\dot\gamma(t))=\tfrac12 d\Phi^2(\dot\gamma(t))$ and
$\dot\gamma(t_0)=v_0$.
In particular, the vector $\ddot\gamma(t_0)$ is $Q$-orthogonal
to the tangent plane $T_{x_0}M$.
Let $n$ be a $Q$-unit vector which is $Q$-orthogonal to $T_{x_0}M$
and proportional to $\ddot\gamma(t_0)$ if the latter is nonzero.
Then
\begin{equation}
\label{e-KQ}
\langle K, w\rangle = Q(\ddot\gamma(t_0),w)
= Q(\ddot\gamma(t_0),n)\cdot Q(w,n)
\end{equation}
for every $w\in V$.
Now we compute $\rho''_{ss}(t_0,0)$ as follows:
\begin{equation}
\label{e-rhoss}
 \tfrac12\rho''_{ss}(t_0,0) = \frac{d^2}{ds^2}\bigg|_{s=0} \tfrac12 \Phi^2(v(t_0,s))
 = Q(v'_s,v'_s) + L(v''_{ss})
\end{equation}
where the partial derivatives $v'_s$ and $v''_{ss}$
are taken at $(t_0,0)$.
By \eqref{e-Lv2}, at $(t,s)=(t_0,0)$ we have
$$
 L(v''_{ss}) = - \langle K, r''_{ss}\rangle 
 = - Q(\ddot\gamma(t_0),n)\cdot Q(r''_{ss},n)
 = - Q(r''_{tt},n)\cdot Q(r''_{ss},n)
$$
where the second identity follows from \eqref{e-KQ}.
Using this identity and the fact that $v'_s=r''_{ts}$,
we rewrite \eqref{e-rhoss} as follows:
$$
 \tfrac12\rho''_{ss}(t_0,0) = Q(r''_{ts},r''_{ts})- Q(r''_{tt},n)\cdot Q(r''_{ss},n) .
$$
With the trivial estimate $Q(r''_{ts},r''_{ts})\ge Q(r''_{ts},n)^2$,
this implies
$$
 \tfrac12\rho''_{ss}(t_0,0) \ge Q(r''_{ts},n)^2 - Q(r''_{tt},n)\cdot Q(r''_{ss},n) .
$$
The right-hand side is minus the determinant of the second fundamental form
of~$M$ with respect to the Euclidean structure $Q$ and the normal vector $n$.
Since $M$ is a saddle surface, this determinant is nonpositive and we conclude that
$$
 \rho''_{ss}(t_0,0) \ge 0 .
$$
As explained above, this inequality implies that $h$
is an almost calibrator for $\gamma$
and therefore
(by Lemma \ref{l-almost-calibrator})
 $\gamma$ is a shortest path in a neighborhood of
$\gamma([a,b])$.
This finishes the proof of Theorem \ref{t-saddle-surface}.

\section{Existence of saddle embeddings}
\label{sec-embedding}

The goal of this section is to prove Theorem \ref{t-saddle-embedding}.
Our plan is the following. First we define a saddle map
$F\colon U\to\R^4$, where $U$ is a small neighborhood
of a point, and then we define a norm on $\R^4$
such that $F$ is an isometric embedding with
respect to this norm.
For such a norm to exist, the images of $\phi$-unit vectors
under $dF$ should lie on a smooth strictly convex
hypersurface in $\R^4$ (this surface can be taken
for the unit sphere of the norm that we want to construct).
Our construction ensures that $dF$ restricted to the
set of $\phi$-unit vectors parameterizes a
strictly convex hypersurface located in a small neighborhood
of a plane. Then a separate construction
(described in the first subsection) is used
to extend this surface to a compact smooth strictly convex
hypersurface that can be taken for the unit sphere of a norm.

\subsection{Extending a convex surface}

\begin{df}
\label{d-pre-convex}
Let $\Sigma\subset\R^n$ be a smooth embedded hypersurface.
We say that $\Sigma$ is \textit{pre-convex} if
for every $p\in \Sigma$ there is a linear function $L\colon\R^n\to\R$
such that
\begin{equation}
\label{e-pre-convex}
 L(q) \le L(p) - c\cdot|p-q|^2
\end{equation}
for some constant $c>0$ and all $q\in\Sigma$.
\end{df}

\begin{remark}
The function $L$ satisfying \eqref{e-pre-convex}
is unique up to multiplication by a constant:
it must be zero on the tangent space $T_p\Sigma\subset\R^n$.
\end{remark}

\begin{remark}
\label{r-pre-convex}
If \eqref{e-pre-convex} holds for all $q$ close to $p$,
then the second fundamental form of $\Sigma$ at $p$ (with respect to
a suitable normal vector) is positive definite.
Conversely, if the second fundamental form of $\Sigma$ at $p$
is positive definite, then \eqref{e-pre-convex}
holds for all $q$ from a sufficiently small neighborhood of~$p$.

It follows that, if the requirement of Definition \ref{d-pre-convex}
is satisfied for all $p$ from a compact set $K\subset\Sigma$,
then some neighborhood of $K$ in $\Sigma$ is pre-convex.
\end{remark}

\begin{lemma}
\label{l-convex-extension}
Let $\Sigma\subset\R^n$ be a pre-convex hypersurface and $K\subset \Sigma$
a compact set. Then there exists a compact convex surface $\Sigma'$
(that is, a boundary of a convex body) which is smooth,
quadratically convex, and contains a neighborhood of $K$ in~$\Sigma$.

Furthermore, if $\Sigma$ is symmetric with respect to the origin,
then $\Sigma'$ can be chosen symmetric as well.
\end{lemma}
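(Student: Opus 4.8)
The plan is to extend $\Sigma$ near $K$ to a closed convex hypersurface in two stages: first make the piece near $K$ into a graph over a convex domain in a hyperplane, then cap it off on the far side. By Remark \ref{r-pre-convex} we may shrink and assume $\Sigma$ itself is pre-convex and that $K$ is covered by finitely many pieces on each of which $\Sigma$ is a graph with positive-definite second fundamental form. The first step is to reduce to the case where a neighborhood $W$ of $K$ in $\Sigma$ is a single graph $x_n = u(x')$, $x'\in\Omega\subset\R^{n-1}$, with $\mathrm{Hess}\,u>0$ on $\overline{W'}$ for some slightly smaller $W'\supset K$; this is possible because pre-convexity forces $\Sigma$ to lie (locally near each point) on one side of its tangent plane with a definite gap, so the projection to a suitable hyperplane is a local diffeomorphism, and a standard partition-of-unity/patching argument glues the finitely many local graphs — here I would simply take one tangent hyperplane at a point of $K$ and use compactness plus the uniform quadratic gap \eqref{e-pre-convex} to see that $K$ projects injectively once we pass to a small enough neighborhood.

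Next I would construct a smooth convex function $\bar u$ on all of $\R^{n-1}$ that agrees with $u$ on a neighborhood of (the projection of) $K$ and grows, say, quadratically at infinity. Concretely: pick a smooth cutoff $\chi$ equal to $1$ near $\mathrm{proj}(K)$ and supported in $\Omega$, and set $\bar u = \chi u + (1-\chi)\Lambda|x'|^2 + (\text{correction})$, then add a large multiple of $|x'|^2$; since $\mathrm{Hess}(|x'|^2)=2I$ is uniformly positive and all the error terms coming from derivatives of $\chi$ are bounded on the compact transition region, a large enough coefficient makes $\mathrm{Hess}\,\bar u>0$ everywhere, while near $K$ nothing was changed. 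The graph of $\bar u$ is then a smooth, quadratically convex, unbounded convex hypersurface containing a neighborhood of $K$ in $\Sigma$. To get a \emph{compact} convex surface I would intersect the epigraph of $\bar u$ with a large ball (or a large "slab" capped by a standard smooth strictly convex cap) chosen big enough to contain $K$ in its interior relative to $\Sigma$, and smooth the resulting corner. The cleanest way to do the capping-and-smoothing is to work at the level of convex bodies: the epigraph $\{x_n\ge\bar u(x')\}$ is a smooth convex body with strictly convex boundary; intersecting it with a large Euclidean ball and then convolving the support function (or rounding the edge with a fixed smooth strictly convex profile) yields the desired $\Sigma'$, with quadratic convexity preserved because both pieces had positive-definite second fundamental form away from the (now smoothed) seam.

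For the symmetric case, I would first symmetrize the data: replace $K$ by $K\cup(-K)$ and note that $\Sigma\cap(-\Sigma)$ still contains a pre-convex neighborhood of it (the defining inequality \eqref{e-pre-convex} is preserved under $q\mapsto -q$), so the origin does not lie on the relevant piece of $\Sigma$ and $K$ and $-K$ are disjoint. Then carry out the construction above near $K$, obtaining a convex cap $C$ containing a neighborhood of $K$, and take $\Sigma'$ to be the boundary of the convex hull of $C\cup(-C)$ together with a large symmetric ball $\bar B(0,R)$ — equivalently, build $\bar u$ on one side and $-\bar u(-\,\cdot\,)$ is automatically the matching piece on the other; finally smooth symmetrically. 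The main obstacle, and the only point requiring real care, is the \textbf{smoothing of the seam while keeping the second derivative positive definite}: one must check that the rounding does not destroy strict (quadratic) convexity at the junction between the modified piece and the added cap. I expect to handle this by choosing the cap's second fundamental form to dominate a fixed positive constant and performing the rounding with a fixed smooth strictly convex profile on a thin collar, so that the Hessian of the support function stays bounded below by a positive multiple of the identity throughout; everything else is routine.
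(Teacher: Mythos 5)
Your first reduction --- that after shrinking, a neighborhood of $K$ in $\Sigma$ can be written as a single graph $x_n=u(x')$ over one hyperplane --- is a genuine gap, and it fails precisely in the situation for which the lemma is needed. Pre-convexity is a global one-sidedness condition at each point; it does not imply that the Gauss image of a neighborhood of $K$ lies in an open half-space, which is what a single-graph representation requires. The compact set $K$ is arbitrary: in the application in Section \ref{sec-embedding}, $K=dF(S_0)$ is a closed curve (essentially the unit circle of $\phi$ sitting in the plane $\R^2\times\{0\}\subset\R^4$), and the supporting functionals $L$ at its points sweep out all directions of that plane, including antipodal pairs. No hyperplane projection is injective on a neighborhood of such a $K$, so the object $u$, its Hessian, and the extension $\bar u=\chi u+(1-\chi)\Lambda|x'|^2+\cdots$ do not exist. (Separately, even where a graph chart does exist, adding ``a large multiple of $|x'|^2$'' globally would destroy the required agreement with $u$ near $K$; the correction must be supported where $\chi<1$, and then positivity of the Hessian on the transition annulus needs an actual argument involving the derivatives of $\chi$.) The symmetric case as you set it up inherits the same problem, and the assertion that $K$ and $-K$ are disjoint is also unwarranted --- in the application $K$ is itself centrally symmetric.

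The paper avoids graphs entirely and works with hulls: take a compact neighborhood $K_1$ of $K$ in $\Sigma$, form the intersection $B_1$ of all Euclidean balls of radius $R$ containing $K_1$ (with $R$ exceeding the reciprocals of the normal curvatures over $K_1$), which forces quadratic convexity in the barrier sense and contains $K_1$ in its boundary; then take the inward followed by the outward $\ep$-equidistant to get a $C^1$, quadratically convex body, and finally smooth by convolving the radial function away from $K$ and gluing with a partition of unity. If you want to salvage your route, you must either replace the single graph by this kind of hull construction, or run your graph-plus-cap argument only locally and then face exactly the global patching problem that the ball-hull construction is designed to solve.
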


\begin{proof}
This is a standard type of argument, so we limit ourselves to a sketch.
First of all, there is a neighborhood of $K$ whose closure $K_1$
is compact and contained in~$\Sigma$.
The most natural thing would be to take the convex hull of $K_1$, and the
only problems would be that it is not necessarily smooth and quadratically convex.

It is easy to make it quadratically convex by taking the intersection $B_1$ of all balls 
of radius $R$ containing $K_1$, where $R$ is larger that the reciprocal of normal
curvatures of $\Sigma$ over $K_1$. Then, by choosing $\ep>0$ smaller than
the reciprocal of normal curvatures of $\Sigma$ over $K_1$ and taking the inward 
$\ep$-equidistant of the surface of $B_1$ and then the outward $\ep$-equidistant
of the result, we obtain a surface of a body $B_2$ which contains $K_2$, 
quadratically convex and $C^1$-smooth; furthermore, its principal
curvatures are bounded between $1/R$ and $1/\ep$ in the barrier sense.  

All is left is to smoothen this surface further.
This is done in a standard way by covering the surface
by two open sets one of which contains $K$
and the other does not intersect~$K$.
Then one approximates the radial function of $B_2$
on the second set using convolutions.
Sufficiently close approximations (with derivatives) will preserve
quadratic convexity, and one concludes the argument by gluing these
approximations with the original surface in a neighborhood of $K$
using a partition of unity.
\end{proof}

\subsection{The case of constant metric}

For a Finsler metric $\phi$ in a region $U\subset\R^2$
we denote by $S_\phi U$ the set of all $\phi$-unit vectors
in $TU=U\times\R^2$, that is,
$$
S_\phi U = \{v\in TU:\phi(v)=1\} .
$$
Clearly $S_\phi U$ is a smooth 3-dimensional submanifold
of $TU$.

We say that a Finsler metric on $U\subset \R^2$ is \textit{constant}
if it does not depend on a point, that is there is a norm
$\|\cdot\|$ on $\R^2$ such that $\phi(x,v)=\|v\|$ for all
$x\in U$, $v\in T_xU\simeq\R^2$. Of course this is a coordinate-dependent 
definition (though invariant under affine coordinate changes), however
every flat Finsler metric locally admits a coordinate system in which
it is constant.

\begin{lemma}
\label{l-constant-metric}
For every constant Finsler metric $\phi$ on $\R^2$ there
exist a neighborhood $U\subset\R^2$ of the origin and a
smooth saddle embedding $F\colon U\to\R^4$ such that
the map $dF|_{S_\phi U}$ is an embedding and its image
$dF(S_\phi U)$ is a pre-convex surface in $\R^4$.
\end{lemma}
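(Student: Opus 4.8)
The plan is to write down an explicit formula for $F$. Since $\phi$ is constant, it is governed by a single norm $\|\cdot\|$ on $\R^2$; let $B$ be its unit disk and let $\partial B$ be the $\phi$-unit circle, a smooth quadratically convex curve in $\R^2$. Write points of $\R^2$ as $x=(x^1,x^2)$. I would try $F(x) = (x^1, x^2, \psi(x), \chi(x))$ for two scalar functions $\psi,\chi$ to be chosen, so that $dF$ at a point $x$ sends $v=(v^1,v^2)$ to $(v^1, v^2, \langle d\psi(x),v\rangle, \langle d\chi(x),v\rangle)$. For the saddle condition one wants, at every $x$, both $\psi$ and $\chi$ to have indefinite Hessian and, more precisely, one wants \emph{every} linear combination of the normal directions to give an indefinite second fundamental form; the cleanest way is to arrange $\mathrm{Hess}\,\psi$ and $\mathrm{Hess}\,\chi$ so that every nonzero linear combination $a\,\mathrm{Hess}\,\psi + b\,\mathrm{Hess}\,\chi$ is indefinite. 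A convenient choice is $\psi(x)=\tfrac12\big((x^1)^2-(x^2)^2\big)$ and $\chi(x)=x^1x^2$, whose Hessians are the two traceless symmetric matrices $\mathrm{diag}(1,-1)$ and $\begin{pmatrix}0&1\\1&0\end{pmatrix}$; every nontrivial combination of these is a nonzero traceless symmetric $2\times2$ matrix, hence indefinite. So with this choice $F$ is automatically a saddle embedding near the origin (it is clearly an immersion since the first two coordinates already give one), and this part is essentially free.

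The real content is the second assertion: that $dF(S_\phi U)$ is a pre-convex hypersurface in $\R^4$. Here I would scale: replace $\psi,\chi$ by $\delta\psi,\delta\chi$ for a small parameter $\delta>0$ (this does not affect the saddle property). Then $dF$ restricted to $S_\phi U$ parameterizes, over a small neighborhood $U$ of the origin, the set of points $\big(v^1, v^2, \delta\langle d\psi(x),v\rangle, \delta\langle d\chi(x),v\rangle\big)$ with $(v^1,v^2)\in\partial B$ and $x\in U$. As $\delta\to0$ and $U$ shrinks, this surface converges (in $C^\infty$) to the cylinder $\partial B\times\{0\}\times\{0\}\times\R^2$ — wait, more precisely it is a $C^\infty$-small graph over the product $\partial B \times U$ of the curve $\partial B\subset\R^2_{(v^1,v^2)}$ with the $2$-disk $U$. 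The key computation is to show that for $\delta$ small enough this $3$-manifold satisfies the pre-convexity inequality of Definition \ref{d-pre-convex} at each of its points, i.e.\ that it has a positive definite second fundamental form (by Remark \ref{r-pre-convex} the local version of pre-convexity suffices on a compact piece, then one passes to a neighborhood). The normal direction at a point near $\partial B\times\{0\}$ is close to the co-normal of $\partial B$ in the $(v^1,v^2)$-plane, which contributes a positive definite block coming from the quadratic convexity of $\|\cdot\|$ (the curvature of $\partial B$); in the two ``extra'' directions the second fundamental form is $O(\delta)$, governed by the Hessians of $\psi,\chi$. So the surface's shape operator is a block matrix whose $\partial B$-block is uniformly positive and whose remaining $2\times2$ block is $O(\delta)$ times something bounded; for $\delta$ small the whole operator is positive definite.

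The main obstacle — and the one place I expect real work — is this last curvature estimate, specifically checking that the $2\times2$ block in the two extra directions is genuinely $O(\delta)$ and not, say, $O(1)$ because of the way $d\psi(x),d\chi(x)$ vary with $x$ combined with the shape of $\partial B$. One has to differentiate the parameterization $(x,v)\mapsto(v,\delta\,d\psi(x)\!\cdot\!v,\delta\,d\chi(x)\!\cdot\!v)$ twice, compute the induced metric and second fundamental form, and track the $\delta$-dependence carefully; the tangent directions along $U$ move the base point $x$ and hence change $d\psi,d\chi$, which is where the $O(\delta)$ (not $O(1)$) really comes from — the map is a graph of a function with derivatives $O(\delta)$ over $\partial B\times U$, so its second fundamental form in the ``graph'' directions is $O(\delta)$ while the ``$\partial B$'' direction keeps the $O(1)$ positive curvature of the base curve. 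Once that estimate is in hand, Remark \ref{r-pre-convex} upgrades the pointwise positivity to pre-convexity of a neighborhood, the embedding property of $dF|_{S_\phi U}$ follows from the fact that the first two coordinates $v^1,v^2$ together with the base point $x$ are recovered from the image, and the lemma is proved.
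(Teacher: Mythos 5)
There is a genuine gap, and it sits exactly where you said you expected ``real work'': the curvature of the image in the two base-point directions. With your choice $F(x)=(x^1,x^2,\delta\psi(x),\delta\chi(x))$ and $\psi,\chi$ \emph{quadratic}, the differentials $d\psi(x),d\chi(x)$ are linear in $x$, so for each fixed $v\in\pd B$ the map $x\mapsto dF(x,v)=(v^1,v^2,\delta\,d\psi(x)v,\delta\,d\chi(x)v)$ is affine in $x$ (in complex notation it is $(w,\delta zw)$ with $w=v^1+iv^2$ fixed and $z=x^1+ix^2$ varying). Hence $\Sigma=dF(S_\phi U)$ contains, through every one of its points, an open piece of a flat affine $2$-plane, namely $dF(U\times\{v\})$. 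A hypersurface containing a flat $2$-disk through an interior point $p$ can never satisfy $L(q)\le L(p)-c|p-q|^2$: restricted to the disk, $L(q)-L(p)$ is affine and vanishes at $p$, so it is either identically zero or takes positive values arbitrarily close to $p$. So the surface you build is not pre-convex, for any $\delta>0$. Relatedly, the heuristic ``the $\pd B$-block is uniformly positive and the remaining $2\times2$ block is $O(\delta)$, hence the whole form is positive definite for small $\delta$'' is false as stated: $\mathrm{diag}(1,\varepsilon,-\varepsilon)$ and $\mathrm{diag}(1,0,0)$ are not positive definite however small $\varepsilon$ is. Your base model $\pd B\times U$ is a cylinder, whose second fundamental form is only semidefinite, and semidefiniteness is not stable under small perturbations. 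What is needed is not that the extra block be small but that it be \emph{strictly positive} and moreover dominate the off-diagonal (cross) terms coupling the $v$-directions to the $x$-directions; your construction supplies no mechanism for either.

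The paper's proof supplies exactly this missing mechanism: it keeps the saddle part $(x^2-y^2,\,xy)$ at full size and instead perturbs the \emph{identity} part, replacing $(x,y)$ by $f_\sigma(x,y)=(1-\sigma^2x^2-\sigma^2y^2)(x-\sigma x^3,\,y-\sigma y^3)$. Then $df_\sigma(x,y)(\xi,\eta)=(\xi,\eta)-(A_1,A_2)$ with $A_1,A_2$ quadratic in $(x,y)$, and applying the supporting functional $L_0$ of $B$ at $v_0$ (extended to $\R^4$ through the first two coordinates) one gets the decay $-c_0|v-v_0|^2$ in the $v$-directions from quadratic convexity of $B$, plus a genuinely negative contribution $-c_1\sigma^2(x^2+y^2)$ in the $x$-directions coming from $A_{11},A_{22}$. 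The bulk of the argument is then the delicate bookkeeping showing that the sign-indeterminate cross terms $a\eta A_{12}+b\xi A_{21}$ and $b\eta A_{22}$ are absorbed by these two good terms (using $a\ge a_0$, a Cauchy--Schwarz trick with the exponent $\sigma^{3/2}$, and the hierarchy $|x|,|y|\ll\sigma\ll1$). If you want to salvage your variant by adding higher-order terms to $\psi,\chi$ so that the $x$-slices curve, you would face the same domination problem, and you would additionally have to choose a supporting functional that sees the last two coordinates; the paper's placement of the perturbation in the first two coordinates is what lets it reuse the supporting functional of $B$ directly. Your treatment of the saddle property and of the injectivity of $dF|_{S_\phi U}$ is fine and matches the paper, but those are the easy parts.
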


\begin{proof}
Let $B$ be the unit ball of $\phi$ centered at 0 and $S=\pd B$.
Then $S_\phi U=U\times S$ for any open set $U\subset\R^2$.

There is a parallelogram $P$ containing $B$ such that
the midpoints of its for sides are on~$S$
(for example, consider a minimum area parallelogram
containing~$B$). Introduce a new coordinate system
$(x,y)$ in the plane such that in these coordinates
$$
 P = \{(x,y): x,y\in[-1,1] \}.
$$
Now $B\subset P=[-1,1]^2$ and $B$ contains the four
points $(\pm1,0)$ and $(0,\pm1)$.

For every $\sigma > 0$, define a map
$$
F_\sigma\colon \R^2 \to \R^4 = \R^2\times\R\times\R
$$
by
$$
 F_\sigma(x,y) = (f_\sigma(x,y), x^2-y^2, xy)
$$
where
$$
 f_\sigma(x,y) = (1-\sigma^2x^2-\sigma^2y^2)\cdot(x-\sigma x^3,y-\sigma y^3) \in\R^2 .
$$
Notice that $F_\sigma$ converge to $F_0$ as $\sigma \rightarrow 0$, where 
$$F_0(x,y)=(x,y, x^2-y^2, xy).$$
Observe that the derivative of $F_\sigma$ at the origin is the
standard inclusion of $\R^2$ into $\R^4$: $(\xi,\eta)\mapsto(\xi,\eta,0,0)$.
Therefore $F_\sigma$ when restricted to a small neighborhood of the origin
is a smooth embedding.
We are going to show that $F_\sigma$ satisfies the requirements
for $F$ for all sufficiently small $\sigma>0$.

First we prove that, if $\sigma$ is sufficiently small and
$U\subset\R^2$ is a sufficiently small neighborhood of the origin,
then ${F_\sigma}
{|_U}$ is strictly saddle and $dF_\sigma|_{S_\phi U}$
is an embedding. 
Since $F_\sigma$ converges to $F_0$ with the derivatives as $\sigma\to 0$,
it suffices to verify these facts for $F_0$.

Let us show that $F_0$ is strictly saddle at the origin
(by continuity of the second fundamental form, this
implies that it is strictly saddle near the origin).
For a unit vector  $\nu$ normal to $F_0$ at the origin,
denote by $Q_\nu$ the second fundamental form of $F_0$
with respect to~$\nu$. A unit normal vector $\nu$
can be written as $\nu=\alpha e_3+\alpha e_4$
where $\alpha^2+\beta^2=1$.
Then $Q_\nu=\alpha Q_{e_3}+\beta Q_{e_4}$,
and the quadratic forms $Q_{e_3}$ and $Q_{e_4}$ are given by
$$
 Q_{e_3}(x,y) = x^2-y^2, \qquad Q_{e_3}(x,y) = xy
$$
for all $x,y\in\R$. The forms $Q_{e_3}$ and $Q_{e_4}$ are
linearly independent, hence $Q_\nu\ne 0$.
Furthermore, since $Q_{e_3}$ and $Q_{e_4}$ are traceless, so is $Q_\nu$,
and thus $Q_\nu$ is indefinite. Hence $F_0$ is saddle at~0.

Now we show that $dF_0|_{S_\phi U}$ is an embedding
provided that $U\subset\R^2$ is a sufficiently small neighborhood of~0.
For brevity, we denote $dF_0\colon T\R^2=\R^2\times\R^2\to\R^4$
by $G$. In coordinates, $G$ is given by
$$
 G(x,y,\xi,\eta) = (\xi,\eta,2(x\xi-y\eta),2(x\eta+y\xi))
$$
where $(x,y)$ are coordinates in $\R^2$ and
$(\xi,\eta)$ are coordinates in $T_{(x,y)}\R^2$.

Recall that $S_\phi U=U\times S$ and observe that
$dF_0|_{\{0\}\times S}$ is injective.
Therefore it suffices to verify that
the partial derivatives of $G$ at every point
of $\{0\}\times S$ are linearly independent.
And this is trivial because
$$
\begin{aligned}
 \frac{\pd G}{\pd x}(x,y,\xi,\eta) &= (0,0,2\xi,2\eta), \\
 \frac{\pd G}{\pd y}(x,y,\xi,\eta) &= (0,0,-2\eta,2\xi), \\
 \frac{\pd G}{\pd \xi}(x,y,\xi,\eta) &= (1,0,2x,2y), \\
 \frac{\pd G}{\pd \eta}(x,y,\xi,\eta) &= (0,1,-2y,2x), \\
\end{aligned}
$$
so $\det(dG)=\xi^2+\eta^2$,
and $(\xi,\eta)\ne(0,0)$ if $(\xi,\eta)\in S$.

\medskip
It remains to show that the set
$\Sigma:=dF_\sigma({S_\phi U})=dF_\sigma(U\times S)$
is pre-convex for some $\sigma>0$ and some
neighborhood $U\subset\R^2$ of the origin.
We are going to show that for every sufficiently small $\sigma$
there exists $U$ such that $\Sigma$ is pre-convex.
In other words, we assume that $\sigma\ll 1$
and $|x|,|y|\ll\sigma$ for all $(x,y)\in U$.  
By Remark \ref{r-pre-convex}, it suffices to verify
that the requirement of Definition \ref{d-pre-convex}
is satisfied for every point $p\in dF_\sigma(\{0\}\times S)$.

Let 
$p=dF_\sigma(0,0,\xi_0,\eta_0)=(\xi_0,\eta_0,0,0)$
where $v_0:=(\xi_0,\eta_0)\in S$.
Let $L_0\colon\R^2\to\R$ be the supporting linear
function for $B$ at $v_0$,
that is, 
$L_0(v)\le 1$ for all $v\in B$ and
$L_0(v_0)=1$.
Since $\phi$ is a quadratically convex norm,
we have
\begin{equation}
\label{e-L0-on-plane}
 L_0(v) \le 1 - c_0\cdot|v-v_0|^2
\end{equation}
for some $c_0>0$ and all $v\in B$.
Define $L\colon\R^4\to\R$ by
$$
 L(x,y,z,t) = L_0(x,y) .
$$
We are going to show that $L$ satisfies \eqref{e-pre-convex}
for all $q\in dF_\sigma(U\times S)$.
Since we have already verified that $dF_\sigma|_{U\times S}$
is a smooth embedding, it suffices to show that
$$
 L(dF_\sigma(x,y,\xi,\eta)) \le 1 - c\cdot (x^2+y^2+(\xi-\xi_0)^2+(\eta-\eta_0)^2)
$$
for some $c>0$ and all $(x,y)\in U$, $(\xi,\eta)\in S$.
Note that
$$
L(dF_\sigma(x,y,\xi,\eta)) = L_0(df_\sigma(x,y,\xi,\eta))
$$
by the definitions of $L$ and $F_\sigma$, so we need to show that
\begin{equation}
\label{e-L0-goal}
 L_0(df_\sigma(x,y,\xi,\eta))\le 1 - c\cdot (x^2+y^2+(\xi-\xi_0)^2+(\eta-\eta_0)^2)
\end{equation}
for some $c>0$.

Since the definition of $df_\sigma$ is symmetric with respect to
the changes $x\mapsto-x$, $y\mapsto-y$ and $x\leftrightarrow y$,
it suffices to consider the case when $\xi_0\ge\eta_0\ge 0$.
Since $B$ is inscribed in the square $[-1,1]^2$ and touches
its sides at the points $(1,0)$ and $(0,1)$, the assumption
$\xi_0\ge\eta_0\ge 0$ implies that $\xi_0\ge\frac12$
and the function $L_0$ has the form $L_0(x,y)=ax+by$
where
$$
 a = L_0(1,0) \in(0,1]
$$
and
$$
 b = L_0(0,1) \in [0,1).
$$
Moreover the coefficient $a$ is
bounded from below by a constant $a_0>0$ determined by the shape of $B$,
since the only supporting functions vanishing at $(1,0)$
are those at the points $(0,\pm 1)\in S$, and these
points are separated away from the range $\{\xi_0\ge\eta_0\ge 0\}$
that we restrict ourselves to.

Differentiating the definition of $f_\sigma$:
$$
 f_\sigma(x,y) = (1-\sigma^2x^2-\sigma^2y^2)\cdot(x-\sigma x^3,y-\sigma y^3)
$$
yields
$$
\begin{aligned}
 \frac{\pd f_\sigma}{\pd x}(x,y)
 &= (1-\sigma^2x^2-\sigma^2y^2)\cdot (1-3\sigma x^2,0) 
 - 2\sigma^2 x(x-\sigma x^3,y-\sigma y^3) \\
 &= (1-A_{11},-A_{21})
\end{aligned}
$$
where
$$
\begin{aligned}
 A_{11} &= 3\sigma x^2 + \sigma^2x^2(3-5\sigma x^2) + \sigma^2y^2(1-3\sigma x^2) , \\
 A_{21} &= 2\sigma^2xy(1-\sigma y^2)
\end{aligned}
$$
and, similarly,
$$
 \frac{\pd f_\sigma}{\pd y}(x,y) = ( -A_{12}, 1-A_{22})
$$
where
$$
\begin{aligned}
 A_{12} &= 2\sigma^2xy(1-\sigma x^2), \\
 A_{22} &=  3\sigma y^2 + \sigma^2y^2(3-5\sigma y^2) + \sigma^2x^2(1-3\sigma y^2) .
\end{aligned}
$$
Now for every $(\xi,\eta)\in S$ we have
$$
 df_\sigma(x,y,\xi,\eta) = (\xi,\eta) - (A_1,A_2)
$$
where
$$
 A_1 = \xi A_{11}+\eta A_{12},\qquad A_2= \xi A_{21}+\eta A_{22}
$$
and hence
\begin{equation}
\label{e-L0-partial}
\begin{aligned}
 L_0(df_\sigma(x,y,\xi,\eta)) 
 &= L_0(\xi,\eta) - L_0(A_1,A_2) \\
 &\le 1 - c_0(\xi-\xi_0)^2 - c_0(\eta-\eta_0)^2  - L_0(A_1,A_2)
\end{aligned}
\end{equation}
by \eqref{e-L0-on-plane}. If $(\xi,\eta)$ is separated away
from $(\xi_0,\eta_0)$ by a constant (e.g.\ by $\frac1{10}$),
this inequality implies \eqref{e-L0-goal},
since $A_{ij}$ are small when $\sigma$, $|x|$ and $|y|$ are small.
Thus we may assume that $(\xi,\eta)$ is $\frac1{10}$-close
to $(\xi_0,\eta_0)$ and therefore $\xi\ge\frac13$.
We need to estimate from below the term $L_0(A_1,A_2)$
in  \eqref{e-L0-partial}. Recall that
\begin{equation}
\label{e-L0-Aij}
 L_0(A_1,A_2) = aA_1+bA_2 = a\xi A_{11} +a\eta A_{12} + b\xi A_{21} +b\eta A_{22} .
\end{equation}
Assuming $\sigma,|x|,|y|<\frac1{10}$, we estimate
\begin{equation}
\label{e-A12}
\begin{aligned}
 |a\eta A_{12}| &\le |A_{12}| \le \sigma^2xy , \\
 |b\xi A_{21}| &\le |A_{21}| \le \sigma^2xy
\end{aligned}
\end{equation}
(since $|a|,|b|,|\xi|,|\eta|\le 1$), and
$$
 A_{11} \ge 3\sigma x^2 +\tfrac23\sigma^2y^2 
 \ge \sigma x^2 +\tfrac16\sigma^2y^2 + 2\sigma^{3/2} xy
$$
where the last inequality follows from the 
Cauchy inequality applied to $2\sigma x^2$
and $\tfrac12\sigma^2y^2$, namely
$2\sigma x^2 +\tfrac12\sigma^2y^2\ge 2\sigma^{3/2} xy$.
Since $a\ge a_0$, $\xi\ge\tfrac13$, and
$$
  2\sigma^{3/2} xy = \sigma^{-1/2} (2\sigma^2xy)
  \ge \sigma^{-1/2} |a\eta A_{12} + b\xi A_{21}|
$$
by \eqref{e-A12}, it follows that
$$
 a\xi A_{11} \ge c_1\sigma^2(x^2+y^2) + c_2\sigma^{-1/2} |a\eta A_{12} + b\xi A_{21}|
$$
where $c_1=a_0/18$ and $c_2=a_0/3$. Assuming $\sigma<c_2^{-2}$, it follows that
$$
 a\xi A_{11} +a\eta A_{12} + b\xi A_{21} \ge c_1\sigma^2(x^2+y^2) .
$$
This and \eqref{e-L0-Aij} imply that
$$
L_0(A_1,A_2) \ge c_1\sigma^2(x^2+y^2)+b\eta A_{22}
$$
and then from \eqref{e-L0-partial} we have
\begin{equation}
\label{e-L0-almost}
 L_0(df_\sigma(x,y,\xi,\eta)) \le
 1 - c_0(\xi-\xi_0)^2 - c_0(\eta-\eta_0)^2  - c_1\sigma^2(x^2+y^2) - b\eta A_{22} .
\end{equation}
To achieve our goal \eqref{e-L0-goal}, it suffices to get rid
of the last term $b\eta A_{22}$. Observe that $A_{22}\ge 0$.
Therefore in the case $\eta\ge 0$ we have $b\eta A_{22}\ge 0$ and the result follows.
It remains to consider the case $\eta\le 0$. Observe that
$
 A_{22}\le 4\sigma(x^2+y^2)
$,
therefore
\begin{equation}
\label{e-A22}
 |b\eta A_{22}| \le 4{|\eta|}\sigma(x^2+y^2) .
\end{equation}
In the case $|\eta|< c_1\sigma/10$, this implies that
$$
|b\eta A_{22}| \le \tfrac12c_1\sigma^2(x^2+y^2),
$$
so the term $b\eta A_{22}$ in \eqref{e-L0-almost} is majorized by
the term $c_1\sigma^2(x^2+y^2)$.
And in the case $|\eta|\ge c_1\sigma/10$, the fact
that $\eta\le 0\le \eta_0$
implies
$$
 c_0(\eta-\eta_0)^2 \ge c_0 \eta^2 \ge c_3\sigma^2
$$
where $c_3=c_0c_1^2/100$.
Recall that $|x|,|y|\ll\sigma$ (we are choosing $U$ after $\sigma$),
so we may assume that $x^2+y^2<c_3\sigma/10$. Then \eqref{e-A22} implies
that
$$
 |b\eta A_{22}| \le \tfrac12 c_3\sigma^2 \le \tfrac12 c_0(\eta-\eta_0)^2,
$$
so the term $b\eta A_{22}$ in \eqref{e-L0-almost} is majorized by
the term $c_0(\eta-\eta_0)^2$.

Thus we have proved \eqref{e-L0-goal}. This finishes
the proof of Lemma \ref{l-constant-metric}.
\end{proof}

\subsection{General case}
Since every metric is close to a constant one in
a neighborhood of the origin, Lemma \ref{l-constant-metric}
easily generalizes to arbitrary Finsler metrics on the plane.
Namely the following holds.

\begin{lemma}
\label{l-general-metric}
Let $\phi$ be a Finsler metric on $\R^2$.
Then there
exist a neighborhood $U\subset\R^2$ of the origin and a
smooth saddle embedding $F\colon U\to\R^4$ such that
the map $dF|_{S_\phi U}$ is an embedding and its image
$dF(S_\phi U)$ is a pre-convex surface in $\R^4$.
\end{lemma}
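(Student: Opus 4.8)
The plan is a perturbation argument. The heavy lifting has already been done in Lemma~\ref{l-constant-metric}, and the point is that its conclusion is stable under passing from a constant metric to any metric that is close to it, together with the fact that any Finsler metric on a small enough neighborhood of the origin is $C^\infty$-close to the constant metric obtained by freezing its coefficients at the origin.

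So, let $\phi_0$ be the constant Finsler metric on $\R^2$ given by $\phi_0(x,v)=\phi(0,v)$, and apply Lemma~\ref{l-constant-metric} to $\phi_0$. This produces a neighborhood $U_0$ of the origin and a smooth saddle embedding $F\colon U_0\to\R^4$ for which $dF|_{S_{\phi_0}U_0}$ is an embedding and $dF(S_{\phi_0}U_0)$ is pre-convex. (If the proof of Lemma~\ref{l-constant-metric} uses an affine coordinate system on $\R^2$ adapted to the unit ball of $\phi_0$, we first perform the same affine coordinate change; since it is linear it does not affect any of the closeness statements below.) I claim the same map $F$, restricted to a sufficiently small neighborhood $U\subset U_0$ of the origin, satisfies the conclusion of Lemma~\ref{l-general-metric} for $\phi$. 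Being saddle is immediate: $F|_U$ is the same immersion, already known to be (strictly) saddle on all of $U_0$. What has to be checked is that $dF|_{S_\phi U}$ is still an embedding with pre-convex image.

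The key observation is that $\phi$ and $\phi_0$ agree at the origin and $\phi$ is smooth off the zero section, so on a fixed compact ``annular'' region $\{(x,v):x\in U,\ \tfrac12\le|v|\le 2\}$ the metric $\phi$ converges to $\phi_0$ in $C^k$, for every $k$, as $U$ shrinks to the origin. Hence $S_\phi U$, a regular level set of $\phi$, is a $3$-dimensional submanifold of $TU$ which is $C^k$-close to $S_{\phi_0}U$ (e.g.\ a $C^k$-small graph over it). Both required properties are open under such a perturbation of the submanifold. First, $dF$ is injective on all of $TU_0$: if $dF(x,v)=dF(x',v')$ then $F(x)=F(x')$, whence $x=x'$ because $F$ is an embedding, and then $d_xF(v)=d_xF(v')$, whence $v=v'$ because $F$ is an immersion; moreover $dF|_{S_{\phi_0}U_0}$ is an immersion, and since injectivity of a linear map restricted to a subspace is an open condition, $dF$ remains an immersion on every $3$-plane $C^0$-close to a tangent plane of $S_{\phi_0}U$. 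Thus $dF|_{S_\phi U}$ is an injective immersion, and, choosing $\bar U\subset U_0$ compact, it extends to a continuous injection of the compact set $\overline{S_\phi U}$, so it is a homeomorphism onto its image and therefore an embedding. Second, for pre-convexity we go through Remark~\ref{r-pre-convex} in both directions: a pre-convex hypersurface has positive definite second fundamental form at each of its points, so $dF(S_{\phi_0}U_0)$ does; positive definiteness of the second fundamental form is again open, so for a slightly smaller neighborhood $U'$ the surface $dF(S_\phi U')$ has positive definite second fundamental form at every point of the compact set $dF(\overline{S_\phi U''})$ (with $\overline{U''}\subset U'$ compact); by the converse part of Remark~\ref{r-pre-convex}, some neighborhood of this set in $dF(S_\phi U')$ is pre-convex, and shrinking the base neighborhood once more yields a $U$ for which $dF(S_\phi U)$ is pre-convex.

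There is no genuinely hard step here. The only thing requiring care is the bookkeeping with nested neighborhoods: one fixes $\sigma$ and $U_0$ inside Lemma~\ref{l-constant-metric}, then chooses $U$ small enough that $\phi$ is $C^k$-close to $\phi_0$ on the relevant compact region, and then shrinks once or twice more so that the openness arguments apply on compact pieces and so that pre-convexity — which is a statement about a whole surface, not a germ — survives the round trip through Remark~\ref{r-pre-convex}. This completes the proof of Lemma~\ref{l-general-metric}.
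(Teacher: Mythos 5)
Your overall strategy---treat Lemma \ref{l-constant-metric} as the hard part and derive the general case by perturbation---is also the paper's strategy, but the specific perturbation you set up does not work, and the claim it rests on is false. You assert that, as $U$ shrinks to the origin, $\phi$ converges to the frozen metric $\phi_0$ in $C^k$ on $\{(x,v):x\in U,\ \tfrac12\le|v|\le2\}$, so that $S_\phi U$ is a $C^k$-small graph over $S_{\phi_0}U$. Only the $C^0$ statement is true: the base derivatives $\pd\phi/\pd x$ do not tend to zero as $U$ shrinks (they tend to $\pd\phi/\pd x(0,\cdot)$, which is generically nonzero), so the graph function has $C^1$ norm of order $1$, the tangent planes of $S_\phi U$ are tilted from those of $S_{\phi_0}U$ by a definite angle, and none of your openness arguments (immersion on nearby $3$-planes, positive definiteness of the second fundamental form) apply. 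Worse, the conclusion you aim for---that the \emph{same} $F$ restricted to a small enough $U$ works---is false. Take $\phi(x,v)=(1-N|x|^2)\phi_0(v)$ with $N$ large compared to $\sigma$. At $p=dF(0,v_0)$ the tangent plane of $dF(S_\phi U)$ is unchanged (the section $v(x)=(1-N|x|^2)^{-1}v_0$ has zero derivative at $0$), so the supporting functional is forced, up to scale, to be $L=L_0\circ\pi$ with $\pi$ the projection to the first two coordinates; along that section one computes $L\bigl(dF(x,v(x))\bigr)=(1-N|x|^2)^{-1}\bigl(1-O(\sigma|x|^2)\bigr)>1=L(p)$ for small $x\ne0$, so pre-convexity fails for every small $U$. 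The point is quantitative: the concavity in the base directions produced by Lemma \ref{l-constant-metric} is only of size $c_1\sigma^2|x|^2$, while a non-constant metric perturbs $L\circ dF$ by terms of size $O(1)\cdot|x|^2$ coming from the first and second base derivatives of $\phi$, which swamp it.

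This is exactly what the paper's blow-up is for: replace $\phi$ by $\phi_\ep(x,v)=\phi(\ep x,v)$ on a \emph{fixed} neighborhood $U'$, so that all base derivatives become $O(\ep)$ and $\phi_\ep\to\phi_0$ genuinely in $C^\infty$; then $dF(S_{\phi_\ep}U')$ converges with all derivatives to the fixed compact pre-convex surface $dF(S_{\phi_0}U')$, and on a fixed compact surface pre-convexity with a uniform constant \emph{is} stable under $C^2$-small perturbations. (That stability is also what replaces your appeal to Remark \ref{r-pre-convex}: pointwise positive definiteness of the second fundamental form only yields the \emph{local} supporting inequality, not pre-convexity, so your step (c) is a misreading of the remark.) Undoing the blow-up replaces $F$ by $F_\ep(x)=\ep F(\ep^{-1}x)$ on $\ep U'$---still saddle by affine invariance, but with second fundamental form amplified by $\ep^{-1}$; that amplification is precisely what beats the variation of the metric, and it is what your construction is missing. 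A smaller slip: $dF(x,v)=dF(x',v')$ does not give $F(x)=F(x')$, since $dF(x,v)$ means $d_xF(v)\in\R^4$ and does not record the base point; injectivity must be checked directly from the formula for $G$, as in the paper.
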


\begin{proof}
Let $\phi_0=\phi|_{T_0\R^2}$. We also consider $\phi_0$
as a constant Finsler metric on $\R^2$. For every $\ep>0$,
define a ``blow-up'' metric $\phi_\ep$ on $\R^2$ defined by
$$
 \phi_\ep(x,v) = \phi(\ep^{-1},v), \qquad x\in\R^2,\ v\in T_x\R^2.
$$
Note that $\phi_\ep$ converge to $\phi_0$ with all derivatives
on compacts sets as $\ep\to 0$.

By Lemma \ref{l-constant-metric}, there is a neighborhood $U\subset\R^2$
of the origin and a strictly saddle embedding $F\colon U\to\R^4$
such that $\Sigma_0:=dF(S_{\phi_0}U)$ is a pre-convex surface in $\R^4$.
Fix a neighborhood $U'\Subset U$ of the origin. Since the surfaces
$\Sigma_\ep:=dF(S_{\phi_\ep}U)$ converge to $\Sigma_0$ with all derivatives
on compact sets as $\ep\to 0$, the smaller surfaces $\Sigma'_\ep:=dF(S_{\phi_\ep}U')$
are pre-convex for all sufficiently small $\ep>0$. Fix such an $\ep$
and observe that the map
$$
 F_\ep: x \mapsto \ep^{-1} F(\ep^{-1}x),
$$
from the neighborhood $\ep U'$ of the origin to $\R^4$,
parameterizes a surface homothetic to $F$ in $\R^4$
(and hence is strictly saddle)
and $dF_\ep(S_\phi(\ep^{-1}U')) = \Sigma'_\ep$.
Thus $F_\ep$ and $\ep U'$ suit for $F$ and $U$
from the statement of the lemma.
\end{proof}

Now we are in position to prove Theorem \ref{t-saddle-embedding}.
Since the statement of the theorem is local, it suffices
to prove it for $M=(\R^2,\phi)$ and $x=0$ where $\phi$
is a Finsler metric on $\R^2$.
By Lemma \ref{l-general-metric}, there is
a neighborhood $U\subset\R^2$ of the origin and a
smooth saddle embedding $F\colon U\to\R^4$ such that
the map $dF|_{S_\phi U}$ parameterizes a pre-convex
hypersurface $\Sigma\subset\R^4$. Note that $\Sigma$
is symmetric with respect to the origin.

By Lemma \ref{l-convex-extension}, there exists a symmetric,
compact, smooth, quadratically convex surface $\Sigma'\subset\R^4$ 
which contains a neighborhood $U_0$ of the set
$K=dF(S_0)\subset\Sigma$ where $S_0$ is the unit sphere of $\phi$
in $T_0\R^2$. This surface is the unit sphere of some
smooth and quadratically convex norm $\|\cdot\|$ on $\R^4$.

For a sufficiently small neighbourhood $U'\subset U$ of 0,
we have $dF(S_\phi U')\subset U_0\subset\Sigma'$.
Therefore $\|dF(x,v)\|=1$ for every $x\in U'$
and every $\phi$-unit vector $v\in T_x\R^2$.
This means that $F$ is an isometric embedding
of $(U',\phi)$ to $(\R^4,\|\cdot\|)$.
This finishes the proof of Theorem \ref{t-saddle-embedding}.

\section{Complete convex surfaces}
\label{sec-convex}

The goal of this section is to prove Theorem \ref{t-convex-surface}.
Our plan is the following. Assuming that there is a geodesic line
on a surface of a convex set $B$ in a 3-dimensional normed space $V$,
we rescale $B$ with coefficients going to zero and pass to the limit.
This yields a geodesic line on the surface of the asymptotic cone of $B$,
and this geodesic line contains the cone's apex.
However on a surface of a sharp convex cone
no shortest path can pass through the apex, as shown in Lemma \ref{l-sharp-cone}.

A straightforward realization of this plan would require us to prove
that intrinsic metrics of converging convex surfaces converge to
the intrinsic metric of their limit (which is not necessarily smooth).
While this fact is standard in Euclidean spaces and
certainly true in general normed spaces,
we do not know an elegant proof and
do not want to mess with a cumbersome one here.
We work around this issue by constructing shortcut paths lying
in planar sections of our surfaces (and for planar convex curves
the convergence of lengths is easy, see Lemma \ref{l-convex-curves}).

\begin{notation}
For a vector space $V$ and points $p_1,p_2,\dots,p_n\in V$,
we denote by
$[p_1,p_2,\dots,p_n]$ the broken line
composed of segments $[p_ip_{i+1}]$, $i=1,\dots,i-1$.
If $V$ is equipped with a norm $\|\cdot\|$,
the length of this broken line is given by
$$
\len[p_1,p_2,\dots,p_n] = \sum_{i=1}^{n-1} \|p_i-p_{i+1}\| .
$$
\end{notation}

\medskip
We need the following standard fact
about perimeters of two-dimensional
convex sets
(supplied with a proof for the sake
of completeness).

\begin{lemma}
\label{l-convex-curves}
Let $V$ be a two-dimensional normed space
and $B\subset V$ a compact convex set
with nonempty interior. Then

1. For every compact convex set $B'\supset B$ one has
$\len(\pd B)\le\len(\pd B')$.

2. If $\{B_i\}$ is a sequence of convex sets in $V$
converging to $B$ (in the Hausdorff metric),
then $\len(\pd B_i)\to \len(\pd B)$.
\end{lemma}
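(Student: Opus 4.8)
The proof splits into the monotonicity claim (part~1) and the continuity claim (part~2), and part~2 will follow quickly from part~1 once we note that perimeter is positively homogeneous of degree one, i.e.\ $\len(\pd(tB))=t\,\len(\pd B)$ for $t>0$. Throughout we use that $B'$, containing $B$, also has nonempty interior, so that $\pd B'$ is a convex Jordan curve.

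For part~1 the plan is to reduce $B'$ to $B$ by a countable sequence of \emph{half-plane cuts}, each of which does not increase the perimeter. First I would prove the elementary cutting estimate: if $H$ is a closed half-plane such that $B'\cap H$ has nonempty interior, then $\len(\pd(B'\cap H))\le\len(\pd B')$. In the nontrivial configuration the line $\pd H$ meets $\pd B'$ in two points $p,q$, which split $\pd B'$ into two arcs $\alpha_{\mathrm{in}}\subset H$ and $\alpha_{\mathrm{out}}$, and $\pd(B'\cap H)=\alpha_{\mathrm{in}}\cup[p,q]$; since a straight segment is a shortest path in any normed space, $\|p-q\|\le\len(\alpha_{\mathrm{out}})$, and adding $\len(\alpha_{\mathrm{in}})$ to both sides gives the estimate (the cases where $\pd H$ misses $\operatorname{int}B'$ or contains a flat side of $B'$ are trivial). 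Next, using the separation theorem I would pick supporting half-planes $H_1,H_2,\dots$ of $B$ with $\bigcap_i H_i=B$ (countably many suffice), and set $B'_0=B'$ and $B'_n=B'_{n-1}\cap H_n$. Each $B'_n\supseteq B$ has nonempty interior, so the cutting estimate gives $\len(\pd B'_n)\le\len(\pd B')$ for all $n$; moreover $B'_n$ is a decreasing sequence of compact convex sets with $\bigcap_n B'_n=B'\cap\bigcap_i H_i=B$, hence $B'_n\to B$ in the Hausdorff metric. Finally, since the length of a curve is lower semicontinuous under uniform convergence of parametrizations, and the Hausdorff convergence $B'_n\to B$ is realized by uniformly convergent parametrizations (e.g.\ radial projection of $\pd B$ onto $\pd B'_n$ from an interior point of $B$), we conclude $\len(\pd B)\le\liminf_n\len(\pd B'_n)\le\len(\pd B')$.

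For part~2, translate so that the origin lies in $\operatorname{int}B$. Using $d_H(B_i,B)\to0$ and the fact that $0$ is an interior point of $B$, for every $\ep\in(0,1)$ we have $(1-\ep)B\subseteq B_i\subseteq(1+\ep)B$ for all sufficiently large $i$. Applying part~1 to both inclusions and using homogeneity of perimeter gives $(1-\ep)\len(\pd B)\le\len(\pd B_i)\le(1+\ep)\len(\pd B)$ for large $i$; letting $\ep\to0$ yields $\len(\pd B_i)\to\len(\pd B)$.

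The genuinely routine points (a decreasing sequence of compact convex sets converges in the Hausdorff metric to its intersection; a Hausdorff-small perturbation of a body with $0$ in its interior is trapped between $(1-\ep)B$ and $(1+\ep)B$) I would dispose of in a line each. The one step that deserves real care is the half-plane cutting estimate: one must be precise about the decomposition of $\pd B'$ into the two arcs $\alpha_{\mathrm{in}},\alpha_{\mathrm{out}}$ and handle the degenerate positions of the cutting line — this is exactly where two-dimensionality and convexity enter. I would also be careful to state which convergence of parametrizations underlies the lower semicontinuity step, since that is the only nonformal ingredient in the limiting argument; everything else is the triangle inequality and scaling.
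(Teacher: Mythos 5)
Your proof is correct and rests on the same key estimate as the paper's: a half-plane cut replaces a boundary arc of $\pd B'$ by its chord and hence, by the triangle inequality, does not increase the perimeter; and your part 2 is exactly the paper's argument (trap $B_i$ between $(1-\ep)B$ and $(1+\ep)B$ and use part 1 together with homogeneity). The only divergence is in how part 1 is reduced to the cutting estimate: the paper approximates $\pd B$ from inside by inscribed polygons, so that finitely many cuts suffice and the passage to the limit is just the definition of $\len(\pd B)$ as a supremum of inscribed perimeters, whereas you perform countably many cuts $B'_n\downarrow B$ and then invoke lower semicontinuity of length under uniform convergence of parametrizations --- a slightly heavier but equally valid limiting step.
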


\begin{proof}
1. Since the length of $\pd B$ is a
limit of lengths of inscribed polygons, it suffices
to prove the lemma in the case when $B$ is a polygon.
Let $\pd B=[p_1,p_2,\dots,p_n,p_1]$. If we cut $B'$
along a line containing a segment $[p_ip_{i+1}]$
and remove the piece that does not contain $B$,
the perimeter of $B'$ can only get smaller,
by the triangle inequality. Thus we can make $B$
from $B'$ by finitely many operations each of which
does not increase the perimeter.
Hence $\len(\pd B)\le\len(\pd B')$.

2. Choose the origin in the interior of $B$.
Then the assumption that $B_i\to B$ is equivalent
to the following:
$$
 (1-\ep_i)B \subset B_i \subset (1+\ep_i) B
$$
for some sequence $\ep_i\to0$. By the first part
of the lemma, this implies that
$$
 (1-\ep_i)\len(\pd B) \subset \len(\pd B_i) \subset (1+\ep_i) \len(\pd B),
$$
hence the result.
\end{proof}

\begin{lemma}
\label{l-sharp-cone}
Let $V$ be a 3-dimensional normed space whose norm
is $C^1$-smooth and strictly convex.
Let $K\subset V$ be a sharp cone.
%Then the intrinsic metric of $\pd K$
%has no geodesics going through the origin.
Then for every two points $p,q\in\pd K\setminus\{0\}$
there exists a path that connects $p$ and $q$ in $\pd K$,
is strictly shorter than the broken line $[p,0,q]$,
and is contained in some plane $\alpha\subset V$.
\end{lemma}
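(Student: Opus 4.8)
I would realize the required path as one of the two boundary arcs of a suitable planar cross-section of $K$, and estimate its length using the two supporting hyperplanes of $K$ at $p$ and $q$ together with Lemma~\ref{l-convex-curves}. First a reduction: since the norm is strictly convex and $K$ contains no line, the vectors $p$ and $-q$ are not positively proportional, so the triangle inequality is strict, $\|p-q\|<\|p\|+\|q\|=\len[p,0,q]$. Hence if $[p,q]\subseteq\pd K$ the segment $[p,q]$ itself is a planar path in $\pd K$ shorter than $[p,0,q]$, and we are done; this disposes of the case when $0,p,q$ are collinear (then $[p,q]$ lies on a ray of $K$) and the case when $p$ and $q$ lie in a common face of $\pd K$. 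So assume $[p,q]\not\subseteq\pd K$; in particular $0,p,q$ are not collinear.

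Pick supporting hyperplanes $H_p\ni p$ and $H_q\ni q$ of $K$. A supporting hyperplane $\{L=c\}$ of a cone passes through the apex (taking $2v\in K$ with $L(v)=c$ forces $c\le0$, and $0\in K$ forces $c\ge0$), so $0\in H_p\cap H_q$; since $p,q$ lie in no common face, $H_p\ne H_q$, so $\mu:=H_p\cap H_q$ is a line $\R u$ through the origin, and $H_p\cap\operatorname{span}(p,q)=\R p$, $H_q\cap\operatorname{span}(p,q)=\R q$, whence $\mu\cap\operatorname{span}(p,q)=\{0\}$. Consider the convex function $g(t)=\|tu-p\|+\|tu-q\|$, with $g(0)=\|p\|+\|q\|$. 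Suppose first that $t=0$ is \emph{not} a minimum point of $g$; pick $t_*\ne0$ with $g(t_*)<g(0)$ and put $m:=t_*u$. Then $m\notin\operatorname{span}(p,q)$, so $\alpha:=\operatorname{aff}\{p,q,m\}$ is a plane not containing $0$, and $\alpha\cap H_p$, $\alpha\cap H_q$ are two distinct lines through $m$ (carrying $p$ and $q$ respectively). Writing $L_p,L_q\ge0$ for functionals vanishing on $H_p,H_q$, we get
\[
 R:=\alpha\cap K\ \subseteq\ W:=\alpha\cap\{L_p\ge0\}\cap\{L_q\ge0\}=\{\,m+s(p-m)+t(q-m):s,t\ge0\,\},
\]
a wedge with apex $m$; the line through $p$ and $q$ meets $W$ exactly in $[p,q]$ and cuts off from $W$ the triangle $T:=\operatorname{conv}\{m,p,q\}$. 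Because $[p,q]\not\subseteq\pd K$, an interior point of $[p,q]$ lies in $\operatorname{int}K$, so $R$ is two-dimensional in $\alpha$ and straddles the line $pq$; let $\Gamma$ be the boundary arc of $R$ joining $p$ to $q$ on the side of $pq$ containing $m$, and $R^+$ the region it bounds with $[p,q]$. Then $\Gamma$ is planar, $\Gamma\subseteq\pd(\alpha\cap K)\subseteq\pd K$, and $R^+\subseteq W\cap T=T$. Applying part~1 of Lemma~\ref{l-convex-curves} to $R^+\subseteq T$ in the normed plane $\alpha$ gives
\[
 \len(\Gamma)+\|p-q\|=\len(\pd R^+)\ \le\ \len(\pd T)=\|p-q\|+\|m-p\|+\|m-q\|,
\]
so $\len(\Gamma)\le\|m-p\|+\|m-q\|=g(t_*)<g(0)=\len[p,0,q]$, as required.

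It remains to treat the degenerate case, which I expect to be the main obstacle: it may happen that $t=0$ minimizes $g$, i.e.\ $\langle d\|\cdot\|(p)+d\|\cdot\|(q),u\rangle=0$. This does occur for configurations symmetric about the apex — for instance two antipodal points of a circular cross-section of a round cone — and then no choice of $m$ on $\mu$ gives a strictly shorter triangle $T$. In this case one must use instead that the corner of $[p,0,q]$ at the apex is genuinely bent ($p$ and $q$ do not lie on opposite rays, since $K$ has no line), so that cutting it off by a nearby plane strictly shortens the broken line even though the naive first variation vanishes. Concretely, I would take planes $\alpha_\theta$ through the line $pq$ tending to $\operatorname{span}(p,q)$, form the corner-cutting arcs $\Gamma_\theta$ of $\partial(\alpha_\theta\cap K)$ as above, and show directly that $\len(\Gamma_\theta)<\|p\|+\|q\|$ for all small $\theta>0$; since $K$ is a cone the configuration near the apex is scale-invariant, which should make such an estimate uniform. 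Making this last step clean and uniform is the part I expect to demand the most care.
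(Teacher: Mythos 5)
Your reduction and your treatment of the non-degenerate case are correct and follow the same basic strategy as the paper's proof: supporting planes $H_p,H_q$ at $p,q$, the line $\mu=H_p\cap H_q$ through the apex, and a planar cross-section of $K$ whose boundary arc is compared, via part 1 of Lemma~\ref{l-convex-curves}, with a circumscribed triangle $[p,m,q]$, $m\in\mu$. (At this point your wedge argument is in fact slightly cleaner than the paper's, which first replaces $K$ by a trihedral cone.)

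The genuine gap is exactly the case you flag: $t=0$ minimizes $g(t)=\|tu-p\|+\|tu-q\|$ on $\mu$, i.e.\ $g'(0)=0$. This case really occurs (e.g.\ antipodal points of a cross-section of a round cone) and is the heart of the lemma, and your sketch for it does not close. For your planes $\alpha_\theta$ through the line $pq$, the only circumscribed body that $H_p$ and $H_q$ provide is the triangle with apex $x_\theta=\alpha_\theta\cap\mu$, which yields $\len(\Gamma_\theta)\le\|x_\theta-p\|+\|x_\theta-q\|=g(t_\theta)$ with $x_\theta=t_\theta u\to 0$; since $0$ minimizes $g$, this bound is $\ge\len[p,0,q]$ and gives no gain. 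The missing ingredient is a \emph{third} supporting plane $H_3$ of $K$ transversal to $\mu$, which exists precisely because the cone is sharp. The paper moves the vertex to $vt\in\mu$, $t>0$, with $v$ pointing out of the half-space $H_3^+$, and replaces the corner of $[p,vt,q]$ at $vt$ by the chord $[a(t),b(t)]$ cut off by $H_3$; the \emph{strict} triangle inequality (strict convexity of the norm) gives a first-order saving $\bigl(\|v-v_1\|+\|v-v_2\|-\|v_1-v_2\|\bigr)t+o(t)>0$, which beats the cost $g(t_\theta)-g(0)=o(t_\theta)$ of moving the vertex, since the derivative vanishes in this remaining case. In your framework this amounts to circumscribing $R_\theta^+$ by a quadrilateral $[p,a,b,q]$ with $[a,b]\subset H_3\cap\alpha_\theta$ rather than by a triangle; without such a transversal cut the estimate cannot be made uniform, so this step must be supplied.
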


\begin{proof}
Let $H_1$ and $H_2$ be supporting planes to $K$
at $p$ and $q$ respectively. Since the cone is sharp,
there is a third supporting plane $H_3$ that does not
contain the intersection line $H_1\cap H_2$.
Consider the trihedral cone $K'=H_1^+\cap H_2^+\cap H_3^+$
where $H_i^+$ denotes the half-space bounded by $H_i$
and containing $K$.

It suffices to prove the lemma for $K'$ in place of $K$.
Indeed, suppose that for some plane $\alpha\subset V$
a boundary arc $\sigma'$
of $F':=\alpha\cap K'$ between $p$ and $q$ is shorter than $[p,0,q]$.
Consider the corresponding (that is, lying in the same half-plane
with respect to the line $\langle pq\rangle\subset\alpha$)
boundary arc $\sigma$ of $F:=\alpha\cap K$.
Since $F\subset F'$, Lemma \ref{l-convex-curves}
implies that
$$
\len(\sigma)\le\len(\sigma')<\len[p,0,q]
$$
and the lemma follows from its restatement for $K'$.

Thus now we restrict ourselves to
proving the assertion for $K'$.

Let $v$  be a nonzero vector in the line $H_1\cap H_2$
pointing outwards $K'$ (that is, $-v$ points in
the direction of an edge of $K'$).
Define
$$
 f(t) = \len[p,vt,q]=\|p-vt\|+\|q-vt\| .
$$
Note that $f$ is a strictly convex function differentiable at~0.

If $f'(0)>0$, then $f(-t)<f(0)$ for a small $t>0$.
Observe that $f(-t)$ is the length of the broken line
$[p,-vt,q]$ which lies on $\pd K'$ and is contained
in a plane (since it has only two edges).
Thus we have found a desired broken line
in the case when $f'(0)>0$.

It remains to consider the case when $f'(0)\le 0$.
For every $t\ge 0$, let $a(t)$ and $b(t)$ denote
the intersection points of segments
$[p,vt]$ and $[q,vt]$ with the plane $H_3$.
Note that $a(t)$ and $b(t)$ lie on edges of $K'$
and the broken line $[p,a(t),b(t),q]$ is contained
in $\pd K'$.
For $t=0$, we have $a(0)=b(0)=0$.

One easily sees that $a(t)$ and $b(t)$ are differentiable
in $t$ and their derivatives at~0 are nonzero
vectors (pointing in the directions of the respective edges). 
Denote these vectors by $v_1$ and $v_2$
and define
$$
 g(t) = \len[p,a(t),b(t),q] .
$$
Then
$$
 f(t) - g(t) = \|vt-a(t)\| + \|vt-b(t)\| - \|a(t)-b(t)\|.
$$
Therefore
$$
 \lim_{t\to+0} \frac{f(t)-g(t)}t = \|v-v_1\|+\|v-v_2\| - \|v_1-v_2\| > 0
$$
by the strict triangle inequality for the norm $\|\cdot\|$. Hence
$$
 \lim_{t\to +0}\frac{g(t)-f(0)}t
 = f'(0) - \lim_{t\to+0} \frac{f(t)-g(t)}t < 0
$$
since $f'(0)\le 0$. Therefore $g(t)<f(0)$
for all sufficiently small $t>0$.
Thus, for a small $t>0$, 
the broken line $[p,a(t),b(t),q]$
is shorter than $[p,0,q]$.
By construction, this broken line lies in the plane
through the points $p$, $q$ and $vt$.
\end{proof}

\begin{proof}[Proof of Theorem \ref{t-convex-surface}]
We may assume that the origin is contained in the interior of $B$.
Suppose that there is a geodesic line $\gamma\colon(-\infty,\infty)\to\pd B$.
For every $\lambda>1$, let $H^\lambda\colon V\to V$ denote the homothety
with coefficient $\lambda^{-1}$, that is, $H^\lambda(x)=\lambda^{-1}x$ for all $x\in V$.
Let $B^\lambda=H^\lambda(B)$ and $\gamma^\lambda\colon[-1,1]\to\pd B^\lambda$
is a path defined by $\gamma^\lambda(t)=H^\lambda(\lambda t))$.
Note that $\gamma^\lambda$ is a homothetic image of $\gamma|_{[-\lambda,\lambda]}$
reparameterized by arc length. 
Since $\gamma$ is a geodesic line on $\pd B$, $\gamma^\lambda$ is a shortest path
on $\pd B^\lambda$.

Now let $\lambda\to\infty$. The sets $B^\lambda$ converge to the asymptotic
cone $K:=\bigcap_{\lambda>1} B^\lambda$.
Since $B$ does not contain straight lines, $K$ is a sharp cone.
We assume that $K$ has nonempty
interior (the case when $K$ is contained in a plane is similar and
left to the reader).
Therefore the endpoints of the curves $\gamma^\lambda$ lie within a compact
region in $V$. Choose a subsequence $\{\lambda_i\}$,
$\lambda_i\to\infty$, such that $p_i:=\gamma^{\lambda_i}(-1)$
and $q_i:=\gamma^{\lambda_i}(1)$ converge to some points $p,q\in\pd K$.
Since the curves $\gamma^{\lambda_i}$ are 1-Lipschitz
and $\gamma^{\lambda_i}(0)=\lambda_i^{-1}\gamma(0)\to 0$,
the distances from $p$ and $q$ to the origin are not greater than~1.
Therefore by Lemma \ref{l-sharp-cone} there is a plane $\alpha\subset V$
such that a boundary arc $\sigma$ of $\alpha\cap K$ between $p$ and $q$
has $\len(\sigma)<2$.

We assume that $p\ne q$ (the case $p=q$ is trivial).
Fix a point $o\in\alpha\cap \operatorname{int}(K)$.
For each $i\gg 1$, let $\alpha_i\subset V$ be the
plane through $o$, $p_i$ and $q_i$.
Note that these planes converge to $\alpha$,
hence there are boundary arcs $\sigma_i$ of $\alpha_i\cap B^{\lambda_i}$
that converge to $\sigma$.
Consider a ``triangle'' $T\subset\alpha$ bounded by $\sigma$
and the segments $[op]$, $[oq]$. Applying Lemma \ref{l-convex-curves}
to $T$ and suitable projections of corresponding ``triangles''
in the planes $\alpha_i$ (and taking into account that the norms on $\alpha_i$
Lipschitz converge to the norm on $\alpha$) yields that $\len(\sigma_i)\to \len(\sigma)<2$.
Hence $\len(\sigma_i)<2=\len(\gamma^{\lambda_i})$ for a sufficiently large $i$.
Therefore $\gamma^{\lambda_i}$ is not a shortest path on $\pd B^{\lambda_i}$,
a contradiction.
\end{proof}

\bibliographystyle{plain}

\end{document}